\def \[{\begin{equation}}
\def \]{\end{equation}}
\def\R{\mathbb{R}}
\newtheorem{thm}{Theorem}[section]
\newtheorem{lem}{Lemma}[section]
\newtheorem{defn}{Definition}[section]
\newtheorem{rem}{Remark}[section]
\newtheorem{exam}{Example}[section]
\numberwithin{equation}{section}
\title{\bf A dynamical system based on projection operator for solving absolute value equations associated with second-order cone}
\author[a]{Cairong Chen\thanks{Supported partially by the National Natural Science Foundation of China (Grant No. 11901024) and  the Natural Science Foundation of Fujian Province (Grant No. 2021J01661). Email address: cairongchen@fjnu.edu.cn.}}
\author[b]{Dongmei Yu\thanks{Corresponding author. Supported partially by the Ministry of Education in China of Humanities and Social Science Project (Grand No. 21YJCZH204), the Natural Science Foundation of Liaoning Province (Grand Nos. 2020-MS-301, LJ2020ZD002) and the Youth Talent Entrustment Project of Liaoning Provincial Federation Social Science Circles (Grand No. 2022lslwtkt-069). Email: {yudongmei1113@163.com}.}}
\author[c]{Deren Han\thanks{Supported partially by the National Natural Science Foundation of China (Grant Nos. 12131004 and 11625105). Email: {handr@buaa.edu.cn}.}}
\author[a]{Changfeng Ma\thanks{Supported partially by the National Key Research and Development Program of China (No.
2019YFC0312003). Email address: macf@fjnu.edu.cn.}}
\affil[a]{School of Mathematics and Statistics, FJKLMAA and Center for Applied Mathematics of Fujian Province, Fujian Normal University, Fuzhou, 350007, P.R. China}
\affil[b]{Institute for Optimization and Decision Analytics, Liaoning Technical University, Fuxin, 123000, P.R. China}
\affil[c]{LMIB of the Ministry of Education, School of Mathematical Sciences, Beihang University, Beijing 100191, P.R. China}
\begin{document}
\date{\today}
\maketitle


\begin{quote}
{\bf Abstract:}
A new equivalent reformulation of the absolute value equations associated with second-order cone (SOCAVEs) is emphasised, from which a dynamical system based on projection operator for solving SOCAVEs is constructed. Under proper assumptions, the equilibrium points of the dynamical system exist and could be (globally) asymptotically stable. Some numerical simulations are given to show the effectiveness of the proposed method.

{\small
\medskip
{\em 2000 Mathematics Subject Classification}.
90C30, 90C33, 65K10

\medskip
{\em Keywords}.
Absolute value equations; Second-order cone; Dynamical system; Asymptotical stability;  Equilibrium point.
}

\end{quote}
\section{Introduction}\label{sec:intro}
The \emph{second-order cone} (SOC) in $\mathbb{R}^n$ is defined by
$$
\mathcal{K}^n = \left\{(x_1, x_2)\in \mathbb{R}\times \mathbb{R}^{n-1} : \|x_2\|\le x_1 \right\},
$$
where $\|\cdot\|$ denotes the Euclidean norm. If $n = 1$, let $\mathcal{K}^n$ represent the set of nonnegative reals $\mathbb{R}_+$. Moreover, a general SOC $\mathcal{K}\subset \mathbb{R}^n$ could be the Cartesian product of SOCs \cite{chen2005,chen2012,fukushima2001}, i.e.,
$$
\mathcal{K} = \mathcal{K}^{n_1}\times \cdots \times \mathcal{K}^{n_r}
$$
with $n_1,\cdots,n_r,r\ge 1$ and $n_1 + \cdots + n_r = n$. Without loss of generality, we focus on the case that $r=1$ because all the analysis can be carried over to the setting of $r>1$ according to the property of Cartesian product. For any $x=(x_1,x_2)\in \mathbb{R}\times \mathbb{R}^{n-1}$ and $y = (y_1,y_2)\in \mathbb{R}\times \mathbb{R}^{n-1}$, their \emph{Jordan product} is defined as \cite{chen2005,chen2012,fukushima2001}
$$
x \circ y = \left( \langle x, y\rangle, y_1x_2 + x_1 y_2\right)\in \mathbb{R}\times \mathbb{R}^{n-1},
$$
where $\langle\cdot, \cdot\rangle$ denotes the Euclidean inner product in $\mathbb{R}^n$. With this definition, the vector $|x|$ in SOC $\mathcal{K}^n$ is computed by
\begin{equation}\label{eq:dave}
|x| = \sqrt{x\circ x}.
\end{equation}

In this paper, we consider the problem of solving the absolute value equations associated with SOC (SOCAVEs) of the form
\begin{equation}\label{eq:socave}
    Ax - |x| - b=0
\end{equation}
with $A\in \mathbb{R}^{n\times n}$ and $b\in \mathbb{R}^n$. Unless otherwise stated, throughout this paper $|x|$ is defined as in~\eqref{eq:dave}. SOCAVEs~\eqref{eq:socave} is a special case of the generalized absolute value equations associated with SOC (SOCGAVEs)
\begin{equation}\label{eq:socgave}
    Ax + B|x| - b=0
\end{equation}
with $A,B\in \mathbb{R}^{m\times n}$ and $b\in \mathbb{R}^m$. To our knowledge, SOCGAVEs~\eqref{eq:socgave}
was formally introduced by Hu, Huang and Zhang \cite{hu2011} and further studied in \cite{nguyen2019,miao2017,huang2019,miao2022b} and the references therein. In addition, SOCAVEs~\eqref{eq:socave} is a natural extension of the standard absolute value
equations~(AVEs)
\begin{equation}\label{eq:ave}
Ax - |x| =b,
\end{equation}
meanwhile, SOCGAVEs~\eqref{eq:socgave} is an extension of the generalized absolute value equations (GAVEs)
\begin{equation}\label{eq:gave}
Ax - B|x| =b.
\end{equation}
In AVEs~\eqref{eq:ave} and GAVEs~\eqref{eq:gave}, the vector $|x|$ denotes the componentwise absolute value of the vector $x\in \mathbb{R}^n$. It is known that GAVE~\eqref{eq:gave} was first introduced by Rohn in \cite{rohn2004} and further investigated in \cite{mang2007,prok2009,hlad2018} and the references therein. Obviously, AVEs~\eqref{eq:ave} is a special case of GAVEs~\eqref{eq:gave}.

Over the past two decades,  AVEs~\eqref{eq:ave} and GAVEs~\eqref{eq:gave} have been widely studied because of its relevance to many mathematical programming problems, such as the linear complementarity problem (LCP), bimatrix games and others; see e.g. \cite{mang2007,mame2006,prok2009}. Hence, abundant theoretical results and numerical algorithms for both AVEs~\eqref{eq:ave} and GAVEs~\eqref{eq:gave} have been established. On the theoretical aspect, for instance, Mangasarian \cite{mang2007} shown that solving GAVEs~\eqref{eq:gave} is NP-hard; if GAVE~\eqref{eq:gave} is solvable, checking whether it has a unique solution or multiple solutions is NP-complete \cite{prok2009}. Moreover, various sufficient or necessary conditions on solvability and non-solvability of AVEs \eqref{eq:ave} and GAVEs \eqref{eq:gave} were discussed in \cite{mezzadri2020,wuli2018,mame2006,prok2009,hladik2022}. The latest trend is to investigate error bounds and the condition number of AVEs~\eqref{eq:ave} \cite{zamani2020}. On the numerical aspect, there are many algorithms for solving AVEs~\eqref{eq:ave} and GAVEs~\eqref{eq:gave}. For example, the Newton-type methods \cite{mang2009,crfp2016,caqz2011}, the SOR-like method \cite{kema2017}, the concave minimization methods \cite{mang2007a,zamani2021}, the exact and inexact Douglas-Rachford splitting methods \cite{chen2022} and others; see e.g. \cite{chen2021,yuch2020,abhm2018,maer2018,maee2017,yu2022} and the references therein.

We are interested in SOCAVEs~\eqref{eq:socave} and SOCGAVEs~\eqref{eq:socgave} not only because they are extensions of the standard ones, but also because they are equivalent with some LCPs associated with SOC (SOCLCPs), which have various applications in engineering, control and finance~\cite{hu2011,miao2017,miao2021}. Recently, some numerical methods and theoretical results have been developed for SOCAVEs~\eqref{eq:socave} and SOCGAVEs~\eqref{eq:socgave}. For the numerical side, Hu, Huang and Zhang \cite{hu2011} proposed a generalized Newton method for solving SOCGAVEs~\eqref{eq:socgave} (Here and in the sequel, we assume $m=n$). Then, Huang and Ma \cite{huang2019} presented some weaker convergent conditions of the generalized Newton method. Miao et al. \cite{miao2017} proposed a smoothing Newton method for SOCGAVEs~\eqref{eq:socgave} and a unified way to construct smoothing functions is explored in \cite{nguyen2019}. Huang and Li \cite{huang2022} proposed a modified SOR-like method for SOCAVEs~\eqref{eq:socave}. Miao et al. \cite{miao2022} suggested a Levenberg-Marquardt method with Armijo line search for SOCAVEs~\eqref{eq:socave}. For the theoretical side, Miao et al. \cite{miao2022b} studied the existence and nonexistence of solution to SOCAVEs~\eqref{eq:socave} and SOCGAVEs~\eqref{eq:socgave}. In addition, the unique solvability for SOCAVEs~\eqref{eq:socave} and SOCGAVEs~\eqref{eq:socgave} was also investigated in \cite{miao2022b}. Miao and Chen \cite{miao2021} investigated conditions under which the unique solution of SOCAVEs~\eqref{eq:socave} is guaranteed, which are different from those in \cite{miao2022b}. Hu, Huang and Zhang proved that SOCGAVEs~\eqref{eq:socgave} is equivalent to the following problem: find $x,y\in \mathbb{R}^n$ such that
\begin{equation}\label{eq:sochlcp}
\begin{array}{l}
Mx + Py = c,\\
x\in \mathcal{K}^n,\quad y\in \mathcal{K}^n,\quad \langle x, y\rangle=0,
\end{array}
\end{equation}
where $M,P\in \mathbb{R}^{n\times n}$ and $c\in \mathbb{R}^n$. However, the problem~\eqref{eq:sochlcp} is not a standard SOCLCP, which is in the form of
\begin{equation}\label{eq:soclcp}
z\in \mathcal{K}^\ell,\quad w = Nz + q \in \mathcal{K}^\ell,\quad \langle z, w\rangle=0,
\end{equation}
where $N\in \mathbb{R}^{\ell\times \ell}$ and $q\in \mathbb{R}^\ell$. Miao et al. \cite{miao2017} showed that SOCGAVEs~\eqref{eq:socgave} is equivalent to SOCLCP~\eqref{eq:soclcp} with
$$
N = \left[\begin{array}{ccc}
-I & 2I & 0\\
A & B-A & 0\\
-A & A-B &0\end{array}\right], \quad z = \left[\begin{array}{c}
2x_+\\
|x|\\
0\end{array}\right]\quad \text{and}\quad q = \left[\begin{array}{c}
0\\
-b\\
b\end{array}\right],
$$
where $x_+$ is the projection of $x$ onto SOC $\mathcal{K}^n$. Note that the dimension of the above matrix $N$ is three times the dimension of the matrix $A$ (or $B$) (i.e., $\ell = 3n$). More recently, Miao and Chen \cite{miao2021}, under the condition that $1$ is not an eigenvalue of $A$ or $N$, provided the equivalence of SOCAVEs~\eqref{eq:socave} to the standard SOCLCP~\eqref{eq:soclcp} without changing the dimension (i.e., $\ell = n$).

The goals of this paper are twofold: to highlight another equivalent reformulation of SOCAVEs~\eqref{eq:socave} and to present a dynamical system to solve SOCAVEs~\eqref{eq:socave}. Contrast to the numerical methods mentioned above, our method is from a continuous perspective. Our work here is inspired by recent studies on AVEs~\eqref{eq:ave} \cite{chen2021}.

The rest of this paper is organized as follows. In section \ref{sec:pre}, we state a few basic results on the SOC and the autonomous system, relevant to our later developments. An equivalent reformation of SOCAVEs~\eqref{eq:socave} and a dynamical system to solve it are developed in section \ref{sec:main}. Numerical simulations are given in section \ref{sec:numerical}. Conclusions are made in section \ref{sec:conclusion}.

\textbf{Notation.} We use $\mathbb{R}^{n\times n}$ to denote the set of all $n \times n$ real matrices and $\mathbb{R}^{n}= \mathbb{R}^{n\times 1}$. $I$ is the identity matrix with suitable dimension. The transposition of a matrix or a vector is denoted by $\cdot ^\top$. The inner product of two vectors in $\mathbb{R}^n$ is defined as $\langle x, y\rangle\doteq x^\top y= \sum\limits_{i=1}^n x_i y_i$ and $\| x \|\doteq\sqrt{\langle x, x\rangle} $ denotes the Euclidean norm of vector $x\in \mathbb{R}^{n}$. $\|A\|$ denotes the spectral norm of $A$ and is defined by the formula $\| A \|\doteq \max \left\{ \| A x \| : x \in \mathbb{R}^{n}, \|x\|=1 \right\}$. $\textbf{tridiag}(a, b, c)$ denotes a matrix that has $a, b, c$ as the subdiagonal, main diagonal and superdiagonal entries in the matrix, respectively.

\section{Preliminaries}\label{sec:pre}
In this section, we collect a few important results which lay the foundation of our later analysis.

We first recall some basic concepts and background materials regarding SOCs, which can be found in \cite{faraut1994,chen2005,chen2012,fukushima2001,alizadeh2003}.

For $x = (x_1,x_2)\in \mathbb{R}\times \mathbb{R}^{n-1}$, the \emph{spectral decomposition} of $x$ with respect to SOC is given by
\begin{equation}\label{eq:sd}
x = \lambda_1(x) u_x^{(1)} + \lambda_2(x) u_x^{(2)},
\end{equation}
where
\begin{align*}
\lambda_i(x) &= x_1 + (-1)^i \|x_2\|,\\
u_x^{(i)} &= \left\{\begin{array}{l}
\frac{1}{2}\left(1, (-1)^i\frac{x_2}{\|x_2\|}\right), \quad \text{if}\quad x_2\neq 0,\\
\frac{1}{2}\left(1,(-1)^i w\right), \quad \text{if}\quad x_2 = 0\end{array}\right.
\end{align*}
for $i = 1,2$ and $w$ is any vector in $\mathbb{R}^{n-1}$ with $\|w\|=1$. If $x_2\neq 0$, the spectral decomposition is unique. $\lambda_1(x)$ and $\lambda_2(x)$ are called the eigenvalues of $x$ and $\left\{u_x^{(1)}, u_x^{(2)} \right\}$ is called a \emph{Jordan frame} of $x$. It is known that $\lambda_1(x)$ and $\lambda_2(x)$ are nonnegative if and only if $x\in \mathcal{K}^n$. For any real-valued function $f:\mathbb{R}\rightarrow \mathbb{R}$, we define a function on $\mathbb{R}^n$ associated with $\mathcal{K}^n$ by
$$f(x) \doteq f\left(\lambda_1(x)\right) u_x^{(1)} + f\left(\lambda_2(x)\right) u_x^{(2)}$$
if $x\in \mathbb{R}^n$ has the spectral decomposition \eqref{eq:sd}. Then we have
\begin{equation}\label{eq:xabs}
|x| = \left\{\begin{array}{l}
\frac{1}{2}\left( |x_1 - \|x_2\|| + |x_1 + \|x_2\||, \left( |x_1 + \|x_2\|| - |x_1 - \|x_2\||\right)\frac{x_2}{\|x_2\|}\right), \quad \text{if}\quad x_2\neq 0,\\
(|x_1|, 0),\quad \text{if}\quad x_2 = 0.\end{array}\right.
\end{equation}

The projection mapping from $\mathbb{R}^n$ onto $\Omega\subset \mathbb{R}^n$, denoted by $P_{\Omega}$, is defined as
$$P_{\Omega}(x) \doteq \arg\min\{\|x-y\|:y\in \Omega\}.$$
A fundamental property of the projection from $\R^n$ onto $\Omega$ is the following. Given $u\in \mathbb{R}^n$ and a nonempty closed convex subset $\Omega$ of $\R^n$, $w$ is the projection of $u$ onto $\Omega$, i.e., $w=P_{\Omega}(u)$ if and only if
\[\label{pp}
\langle u-w, v-w\rangle \le 0,\quad \forall~ v\in\Omega.\]
Next, we recall the projection onto SOC $\mathcal{K}^n$. As mentioned earlier, let $x_+$ be the projection of $x = (x_1,x_2)\in \mathbb{R}\times \mathbb{R}^{n-1}$ onto $\mathcal{K}^n$, then the explicit formula of the projection is characterized in \cite{chen2005,chen2012,faraut1994,fukushima2001} as below:
\begin{equation}\label{eq:pj}
x_+ = \left\{\begin{array}{l}
x, \quad \text{if}\quad x\in \mathcal{K}^n,\\
0, \quad \text{if}\quad x\in -\mathcal{K}^n,\\
u, \quad \text{otherwise,}
\end{array}\right.
\quad \text{where}\quad
u = \left[\begin{array}{c}
\frac{x_1 + \|x_2\|}{2}\\
\left(\frac{x_1 + \|x_2\|}{2}\right)\frac{x_2}{\|x_2\|}
\end{array}\right].
\end{equation}

Before talking something about the autonomous system, we will give the definitions of the dual cone and the Lipschitz continuity.

\begin{defn}
The dual cone of $\mathcal{K}^n$ is defined as
$$
(\mathcal{K}^n)^* \doteq \{y\in \mathbb{R}^n:\langle x,y\rangle \ge 0, \forall x\in \mathcal{K}^n\}.
$$
\end{defn}
It is known that SOC $\mathcal{K}^n$ is a pointed close convex cone and it is self-dual (i.e., $(\mathcal{K}^n)^* = \mathcal{K}^n$).

\begin{defn}
The function $F:\mathbb{R}^n\rightarrow \mathbb{R}^n$ is said to be Lipschitz continuous with Lipschitz constant $L>0$ if
$$
\|F(x)-F(y)\|\le L\|x-y\|,\quad \forall x,y \in \mathbb{R}^n.
$$
\end{defn}

Consider the autonomous system
\begin{equation}\label{eq:auto-dysm}
\frac{dx}{dt} = g(x),
\end{equation}
where $g$ is a function from $\mathbb{R}^n$ to $\mathbb{R}^n$. Throughout this paper, denote $x(t;x(t_0))$ the solution of~\eqref{eq:auto-dysm} determined by the initial value condition $x(t_0) = x_0$. The following results are well-known and can be found in \cite[Chapter~2 and Chapter~3]{khalil1996}.

\begin{lem}\label{lem:solution}
Assume that $g:\mathbb{R}^n\rightarrow\mathbb{R}^n$ is a continuous function, then for arbitrary $t_0\ge 0$ and $x(t_0)=x_0\in \mathbb{R}^n$, there exists a local solution $x(t;(x(t_0)),\,t\in[t_0,\tau]$ for some $\tau> t_0$. Furthermore, if $g$ is locally Lipschitz continuous at $x_0$, then the solution is unique; and if $g$ is Lipschitz continuous in $\mathbb{R}^n$, then $\tau$ can be extended to $+\infty$.
\end{lem}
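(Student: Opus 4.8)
The plan is to prove the three assertions in turn with the three classical tools of elementary ODE theory: Peano's theorem (via the Arzel\`a--Ascoli compactness argument) for local existence, the Picard--Lindel\"of estimate (via Gronwall's inequality) for uniqueness under a local Lipschitz hypothesis, and an a priori growth bound combined with the continuation principle for the global statement under a global Lipschitz hypothesis.

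First I would replace \eqref{eq:auto-dysm} together with $x(t_0)=x_0$ by the equivalent integral equation $x(t)=x_0+\int_{t_0}^{t}g(x(s))\,ds$. Fixing $\rho>0$ and putting $M\doteq\max\{\|g(\xi)\|:\|\xi-x_0\|\le\rho\}$ (finite since $g$ is continuous on a compact ball), I would work on $[t_0,t_0+\tau]$ with $\tau\doteq\rho/M$ (any positive $\tau$ if $M=0$). On this interval I would build a family of approximate solutions — Euler polygonal arcs of vanishing mesh, or Tonelli's delayed-argument iterates — each of which stays in $\{\xi:\|\xi-x_0\|\le\rho\}$ and has slope bounded by $M$; they are therefore uniformly bounded and equicontinuous, so Arzel\`a--Ascoli yields a subsequence converging uniformly to some $x(\cdot)$. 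Since $g$ is continuous and the convergence is uniform, one may pass to the limit inside the integral, so $x(\cdot)$ solves the integral equation and hence \eqref{eq:auto-dysm} on $[t_0,t_0+\tau]$.

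Next, if $g$ is locally Lipschitz at $x_0$ with constant $L$ on some ball $B\ni x_0$, and $x(\cdot),y(\cdot)$ both solve the problem with the same initial value, then by continuity both stay in $B$ on a short interval $[t_0,t_0+\delta]$; subtracting the integral equations gives $\|x(t)-y(t)\|\le L\int_{t_0}^{t}\|x(s)-y(s)\|\,ds$, and Gronwall's inequality forces $x\equiv y$ there, after which a connectedness argument on $\{t:x(t)=y(t)\}$ extends the equality to the whole common interval of existence. For the global claim, if $g$ is Lipschitz on all of $\mathbb{R}^n$ then $\|g(\xi)\|\le\|g(0)\|+L\|\xi\|$, so any solution obeys $\|x(t)\|\le\|x_0\|+\int_{t_0}^{t}(\|g(0)\|+L\|x(s)\|)\,ds$ and hence, by Gronwall, $\|x(t)\|\le(\|x_0\|+\|g(0)\|(t-t_0))e^{L(t-t_0)}$ on every finite interval. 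Letting $[t_0,T)$ be the maximal interval of existence of the (now unique) solution, if $T<\infty$ this a priori bound confines $x(t)$ to a fixed compact set on which $g$ is bounded, so $\lim_{t\to T^-}x(t)$ exists; restarting the local existence result from that limit point continues the solution past $T$, contradicting maximality, so $T=+\infty$.

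The only genuinely non-elementary step is the Peano existence argument, since it is the one place a compactness tool (Arzel\`a--Ascoli) is unavoidable; uniqueness and global existence are then routine consequences of Gronwall's inequality and the standard continuation lemma. All of this is classical and is exactly the content of \cite[Chapters~2--3]{khalil1996}, so within the paper it is enough to cite that source; the sketch above merely records the proof one would reconstruct if pressed.
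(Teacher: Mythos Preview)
Your sketch is correct and exactly reconstructs the classical Peano/Picard--Lindel\"of/continuation argument; the paper itself offers no proof of this lemma and simply cites \cite[Chapters~2--3]{khalil1996}, which is precisely the source you invoke at the end. So your proposal is consistent with, and strictly more detailed than, what the paper does.
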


\begin{defn}$($Equilibrium point$)$. Let $x^*\in \mathbb{R}^n$, then it is called an equilibrium point of the dynamical system \eqref{eq:auto-dysm} if $g(x^*) = 0$.
\end{defn}

\begin{defn}
$($Stability in the sense of Lyapunov$)$. The equilibrium point $x^*$ of \eqref{eq:auto-dysm} is stable if, for any $\epsilon > 0$, there is a $\delta = \delta(\epsilon)>0$ such that
$$
\|x(t_0) - x^*\|< \delta \quad \Rightarrow \quad\|x(t;x(t_0)) - x^*\|<\epsilon, \forall t \ge t_0.
$$
Furthermore, the equilibrium point $x^*$ of \eqref{eq:auto-dysm} is asymptotically stable if it is stable and $\delta$ can be chosen such that
$$
\|x(t_0)-x^*\|< \delta \quad\Rightarrow \quad\lim_{t\rightarrow\infty}x(t;x(t_0))=x^*.
$$
\end{defn}

\begin{thm}\label{thm:ls}
$($Lyapunov's stability theorem$)$. Let $x^*$ be an equilibrium point of \eqref{eq:auto-dysm} and $\Omega \subseteq \mathbb{R}^n$ be a domain containing $x^*$. If there is a continuously differentiable function $V:\Omega \rightarrow \mathbb{R}$ such that
$$
V(x^*) = 0 \quad \text{and} \quad V(x)> 0,\;\forall x\in \Omega\backslash\{x^*\},
$$
$$
\frac{dV(x)}{dt}=\nabla V(x)^\top g(x)\le 0,\;\forall x\in \Omega,
$$
then $x^*$ is stable. Moreover, if
$$
\frac{dV(x)}{dt}< 0,\;\forall x\in \Omega\backslash\{x^*\},
$$
then $x^*$ is asymptotically stable.
\end{thm}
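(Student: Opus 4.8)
The plan is to argue directly from the definition of (asymptotic) stability, using the sublevel sets of $V$ as trapping regions; this is the classical Lyapunov argument, so I will only indicate the main steps. Fix $\epsilon > 0$ and choose $r \in (0, \epsilon]$ small enough that the closed ball $\bar B_r \doteq \{x \in \R^n : \|x - x^*\| \le r\}$ lies in $\Omega$. Since $V$ is continuous and strictly positive on the compact sphere $S_r \doteq \{x : \|x - x^*\| = r\}$, the number $\alpha \doteq \min_{x \in S_r} V(x)$ is positive. Choosing $\beta \in (0, \alpha)$ and setting $\Omega_\beta \doteq \{x \in \bar B_r : V(x) \le \beta\}$, one checks that $\Omega_\beta$ is contained in the open ball $B_r$ (it cannot meet $S_r$, where $V \ge \alpha > \beta$), and that $V(x^*) = 0$ together with continuity of $V$ yields some $\delta \in (0, r]$ with $\{x : \|x - x^*\| < \delta\} \subseteq \Omega_\beta$.

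The decisive observation is that the hypothesis $\frac{dV(x)}{dt} = \nabla V(x)^\top g(x) \le 0$ makes $t \mapsto V(x(t; x(t_0)))$ nonincreasing along any trajectory that stays in $\Omega$. Hence if $\|x(t_0) - x^*\| < \delta$, then $V(x(t_0)) \le \beta$, so $V(x(t)) \le \beta$ for every $t$ in the maximal interval of existence, which confines the trajectory to $\Omega_\beta \subseteq B_r \subseteq \{x : \|x - x^*\| < \epsilon\}$. Because the trajectory then remains in the compact set $\Omega_\beta \subseteq \Omega$ on which $g$ is bounded, a standard continuation argument (in the spirit of Lemma \ref{lem:solution}) extends the solution to all $t \ge t_0$. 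This establishes stability.

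For asymptotic stability I would additionally argue that $V(x(t))$, being nonincreasing and bounded below by $0$, converges to some $c \ge 0$, and that $c > 0$ is impossible. Indeed, if $c > 0$, continuity of $V$ and $V(x^*) = 0$ provide $d \in (0, r)$ with $V < c$ on $\{x : \|x - x^*\| < d\}$, so the trajectory never enters that ball and therefore stays in the compact annulus $\mathcal A \doteq \{x : d \le \|x - x^*\| \le r\}$, which does not contain $x^*$. On $\mathcal A$ the continuous function $\frac{dV}{dt}$ is strictly negative, hence bounded above by some $-\gamma < 0$; integrating gives $V(x(t)) \le V(x(t_0)) - \gamma(t - t_0) \to -\infty$, contradicting $V \ge 0$. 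Thus $c = 0$, and $V(x(t)) \to 0$ forces $x(t) \to x^*$: otherwise there would exist $t_k \to \infty$ and $\mu > 0$ with $\|x(t_k) - x^*\| \ge \mu$, a subsequence of $x(t_k)$ would converge (by compactness of $\{x : \mu \le \|x - x^*\| \le r\}$) to some $\bar x \ne x^*$, and continuity would give $V(x(t_k)) \to V(\bar x) > 0$, a contradiction.

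The only genuinely delicate point is ensuring the trajectory neither leaves $\Omega$ nor ceases to exist before these compactness arguments are invoked; the trapping-region construction handles this by confining the solution to the compact set $\Omega_\beta$ from the outset, after which forward completeness of the solution and the validity of every ``minimum/maximum over a compact set'' step follow automatically.
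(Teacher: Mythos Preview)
Your argument is the standard Lyapunov proof and is correct in all essentials. However, the paper does not actually prove this statement: Theorem~\ref{thm:ls} is quoted as a well-known result from Khalil's textbook \cite[Chapter~3]{khalil1996} and is used without proof, so there is no ``paper's own proof'' to compare against. What you have written is essentially the proof Khalil gives, so if anything you have supplied what the paper merely cites.
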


\begin{thm}\label{thm:als}
Let $x^*$ be an equilibrium point for \eqref{eq:auto-dysm}. Let $V:\mathbb{R}^n \rightarrow \mathbb{R}$ be a continuously differentiable function such that
$$
V(x^*) = 0 \quad \text{and} \quad V(x)> 0,\;\forall x\neq  x^*,
$$
$$\frac{dV(x)}{dt}< 0,\;\forall x\neq  x^*,
$$
$$
\|x-x^*\|\rightarrow \infty \quad\Rightarrow \quad V(x)\rightarrow\infty,
$$
then $x^*$ is globally asymptotically stable.
\end{thm}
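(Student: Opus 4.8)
The plan is to recognize this as the classical Barbashin--Krasovskii theorem and to establish the two ingredients of global asymptotic stability in turn: Lyapunov stability of $x^*$, and global attractivity, i.e.\ $x(t;x_0)\to x^*$ for every initial point $x_0\in\mathbb{R}^n$. Stability comes essentially for free: applying Theorem~\ref{thm:ls} with $\Omega$ any neighbourhood of $x^*$ (e.g.\ $\Omega=\mathbb{R}^n$), the hypotheses $V(x^*)=0$, $V(x)>0$ for $x\neq x^*$, and $\frac{dV}{dt}\le 0$ all hold, so $x^*$ is stable (indeed locally asymptotically stable). The substance of the proof is the global attractivity.

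First I would fix an arbitrary $x_0$, put $c\doteq V(x_0)$, and consider the sublevel set $\Omega_c\doteq\{x\in\mathbb{R}^n : V(x)\le c\}$. The radial unboundedness hypothesis ($\|x-x^*\|\to\infty \Rightarrow V(x)\to\infty$) together with continuity of $V$ forces $\Omega_c$ to be closed and bounded, hence compact. Since $\frac{dV}{dt}(x(t))=\nabla V(x(t))^\top g(x(t))\le 0$ along any trajectory, $t\mapsto V(x(t))$ is nonincreasing, so $x(t)\in\Omega_c$ throughout its maximal interval of existence; as the trajectory stays in a compact set it cannot escape to infinity, so by Lemma~\ref{lem:solution} (assuming $g$ at least locally Lipschitz, so that the solution is well defined and unique) it extends to all $t\ge t_0$. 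Being nonincreasing and bounded below by $0$, the quantity $V(x(t))$ then converges to some $\ell\ge 0$.

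Next I would show $\ell=0$ by contradiction. If $\ell>0$, continuity of $V$ and $V(x^*)=0$ give a radius $r>0$ with $V(x)<\ell$ on the ball $B_r(x^*)$; since $V(x(t))\ge\ell$ for all $t$, the trajectory never enters $B_r(x^*)$ and hence lives in the compact set $K\doteq\Omega_c\setminus B_r(x^*)$, which excludes $x^*$. On $K$ the continuous function $\frac{dV}{dt}$ is strictly negative, hence attains a maximum $-\gamma<0$; integrating, $V(x(t))\le V(x_0)-\gamma(t-t_0)\to-\infty$, contradicting $V\ge 0$. Therefore $\ell=0$, i.e.\ $V(x(t))\to 0$. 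Finally, $V(x(t))\to 0$ forces $x(t)\to x^*$: otherwise some sequence $t_k\to\infty$ satisfies $\|x(t_k)-x^*\|\ge\varepsilon$, and by compactness of $\Omega_c$ a subsequence of $x(t_k)$ converges to some $\bar x\neq x^*$, whence $V(x(t_k))\to V(\bar x)>0$, a contradiction. Combined with stability, this yields global asymptotic stability.

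The hard part is really the role of radial unboundedness, which is what makes $\Omega_c$ compact; that single fact does double duty, giving global-in-time existence of the trajectory and supplying the compact ``annular'' region $K$ on which $\frac{dV}{dt}$ has a uniform negative bound $-\gamma$, and it is precisely this uniform bound that upgrades ``$V(x(t))$ decreases'' to ``$V(x(t))\to 0$''. Without radial unboundedness the conclusion genuinely fails, so this hypothesis cannot be relaxed. An alternative route is LaSalle's invariance principle: since $\frac{dV}{dt}<0$ off $x^*$, the largest invariant subset of $\{x:\frac{dV}{dt}(x)=0\}$ is $\{x^*\}$ and radial unboundedness makes all trajectories bounded; but the direct estimate above is self-contained.
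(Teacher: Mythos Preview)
The paper does not prove this theorem; it is listed among the preliminary results that ``are well-known and can be found in \cite[Chapter~2 and Chapter~3]{khalil1996}'' and is simply quoted without proof. Your argument is correct and is essentially the standard textbook proof of the Barbashin--Krasovskii global asymptotic stability theorem as found in that reference: use radial unboundedness to get compact sublevel sets, trap the trajectory, show $V(x(t))\to 0$ via a uniform negative bound on $\dot V$ over a compact annulus, and conclude $x(t)\to x^*$.
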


\section{The equivalent reformulation and the dynamical system}\label{sec:main}
In this section, we will associate to the SOCAVEs~\eqref{eq:socave} a dynamical model and then carry out an asymptotic stability analysis for the equilibrium points. To this end, we first highlight that SOCAVEs~\eqref{eq:socave} is equivalent to the generalized SOCLCP (SOCGLCP) as follows:
\begin{equation}\label{eq:socglcp}
Q(x)\doteq Ax + x - b \in \mathcal{K}^n, \quad F(x)\doteq Ax - x - b \in \mathcal{K}^n,\quad \left\langle Q(x), F(x)\right\rangle = 0.
\end{equation}
Moreover, SOCGLCP~\eqref{eq:socglcp} is equivalent to the generalized linear variational inequality problem associated with SOC (SOCGLVI) \cite{facchinei2003}:
\begin{equation}\label{eq:socglvi}
\text{find an}~x^*\in \mathbb{R}^n,~ \text{such that}~Q(x^*)\in \mathcal{K}^n, ~ \left\langle v-Q(x^*),F(x^*)\right\rangle\ge 0,\quad \forall\, v\in \mathcal{K}^n,
\end{equation}
which is also equivalent to
\begin{equation}\label{eq:peq}
Q(x) = P_{\mathcal{K}^n}\left[Q(x) - F(x)\right].
\end{equation}

In order to claim the equivalence between SOCAVEs~\eqref{eq:socave} and SOCGLCP~\eqref{eq:socglcp}, we introduce the following two lemmas.

\begin{lem}\label{lem:ab}(\cite{facchinei2003,mang2007})
Let $a,b\in \mathbb{R}$. Then $a\ge 0$, $b\ge 0$ and $ab = 0$ if and only if $a+b = |a-b|$.
\end{lem}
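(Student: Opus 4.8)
The plan is to verify both implications of Lemma~\ref{lem:ab} directly, using nothing more than a squaring trick and a two-way case split; this is a purely scalar statement, so no SOC machinery is involved.

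For the ``only if'' direction, I would assume $a\ge 0$, $b\ge 0$, and $ab=0$. Since the product vanishes, at least one of $a,b$ equals $0$, and by the symmetry of the claim in $a$ and $b$ it suffices to handle the case $b=0$. Then $a+b=a$, and because $a\ge 0$ we also have $|a-b|=|a|=a$, so the two quantities coincide.

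For the ``if'' direction, I would start from $a+b=|a-b|$ and square both sides to eliminate the absolute value, obtaining $(a+b)^2=(a-b)^2$, i.e. $4ab=0$, hence $ab=0$. It then remains to recover the sign conditions: since $|a-b|\ge 0$, the hypothesis gives $a+b\ge 0$; combining this with $ab=0$ (so that one of the variables is $0$ and the sum reduces to the other) forces both $a\ge 0$ and $b\ge 0$.

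There is essentially no obstacle here; the only point deserving a word of care is the final step of the converse, where one must use $ab=0$ \emph{together with} $a+b\ge 0$ — not $a+b\ge 0$ alone — to conclude nonnegativity of each variable separately. I would make this explicit by writing out the two cases $a=0$ and $b=0$.
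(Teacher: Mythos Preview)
Your argument is correct. The paper itself does not prove Lemma~\ref{lem:ab}; it is quoted without proof and attributed to \cite{facchinei2003,mang2007}, so there is no in-paper proof to compare against. Your elementary treatment (symmetry plus a case split for the forward direction, squaring to get $ab=0$ and then using $a+b\ge 0$ together with $ab=0$ for the converse) is a clean and complete justification, and nothing is missing.
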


\begin{lem}\label{lem:xy}
Let $s,t\in \mathbb{R}^n$. Then
\begin{equation}\label{eq:com}
s\in \mathcal{K}^n,\quad t\in \mathcal{K}^n \quad \text{and}\quad \langle s,t\rangle = 0
\end{equation}
if and only if
\begin{equation}\label{eq:pm}
s+t = |s-t|.
\end{equation}
\end{lem}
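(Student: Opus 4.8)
The plan is to derive the equivalence of \eqref{eq:com} and \eqref{eq:pm} by passing through the variational characterization of the projection onto $\mathcal{K}^n$, rather than through an explicit spectral computation. I will use two bridging facts. First, because $\mathcal{K}^n$ is self-dual, the complementarity system \eqref{eq:com} is equivalent to the single fixed-point relation $s = P_{\mathcal{K}^n}(s-t)$. Second, the projection onto $\mathcal{K}^n$ has the closed form $P_{\mathcal{K}^n}(x) = \tfrac12\bigl(x+|x|\bigr)$. Granting these, $s = P_{\mathcal{K}^n}(s-t) = \tfrac12\bigl((s-t)+|s-t|\bigr)$, and clearing the factor $\tfrac12$ and cancelling gives exactly $s+t = |s-t|$; running the steps in reverse recovers \eqref{eq:com} from \eqref{eq:pm}. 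So the lemma reduces to proving these two facts.

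For the first fact, assume \eqref{eq:com}. By \eqref{pp} (and since $P_{\mathcal{K}^n}(s-t)\in\mathcal{K}^n$ automatically), $s = P_{\mathcal{K}^n}(s-t)$ is equivalent to $s\in\mathcal{K}^n$ together with $\langle (s-t)-s,\,v-s\rangle\le 0$ for all $v\in\mathcal{K}^n$, i.e. $\langle t,v\rangle\ge\langle t,s\rangle$ for all $v\in\mathcal{K}^n$; here $\langle t,s\rangle = 0$ by hypothesis and $\langle t,v\rangle\ge 0$ because $t\in\mathcal{K}^n = (\mathcal{K}^n)^*$, so the relation holds. Conversely, if $s = P_{\mathcal{K}^n}(s-t)$, then $s\in\mathcal{K}^n$ and $\langle t,v\rangle\ge\langle t,s\rangle$ for all $v\in\mathcal{K}^n$ by \eqref{pp}; choosing $v=0$ gives $\langle t,s\rangle\le 0$ and choosing $v=2s\in\mathcal{K}^n$ gives $\langle t,s\rangle\ge 0$, hence $\langle s,t\rangle = 0$, and then $\langle t,v\rangle\ge 0$ for all $v\in\mathcal{K}^n$ forces $t\in(\mathcal{K}^n)^* = \mathcal{K}^n$, which is \eqref{eq:com}.

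The second fact follows from the spectral decomposition \eqref{eq:sd}: writing $x = \lambda_1(x)u_x^{(1)} + \lambda_2(x)u_x^{(2)}$, one has $|x| = |\lambda_1(x)|u_x^{(1)} + |\lambda_2(x)|u_x^{(2)}$ from \eqref{eq:xabs}, while \eqref{eq:pj} can be rewritten as $P_{\mathcal{K}^n}(x) = \max\{\lambda_1(x),0\}u_x^{(1)} + \max\{\lambda_2(x),0\}u_x^{(2)}$, so the scalar identity $\max\{\lambda,0\} = \tfrac12(\lambda+|\lambda|)$ closes the gap; alternatively one just checks $P_{\mathcal{K}^n}(x) = \tfrac12(x+|x|)$ directly against the three cases in \eqref{eq:pj}. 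The computations here are routine; the only points needing attention are keeping the membership condition ``$w\in\Omega$'' attached when invoking \eqref{pp}, and the degenerate cases ($s = 0$, $t = 0$, or $x_2 = 0$), which are harmless and absorbed by the free-vector convention in \eqref{eq:sd}. If one instead wanted a projection-free argument, the crux would be a Cauchy--Schwarz squeeze: from \eqref{eq:com}, $0 = \langle s,t\rangle = s_1t_1 + \langle s_2,t_2\rangle$ combined with $\|s_2\|\le s_1$ and $\|t_2\|\le t_1$ forces $s_2$ and $t_2$ to be antiparallel, so that $s$, $t$ and $s-t$ share a common Jordan frame and \eqref{eq:pm} follows by applying the scalar Lemma~\ref{lem:ab} to the eigenvalues; that squeeze is the delicate step in the elementary route.
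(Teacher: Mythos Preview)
Your argument is correct and takes a genuinely different route from the paper. The paper proves both directions by showing that $s$ and $t$ share a common Jordan frame: for \eqref{eq:com}$\Rightarrow$\eqref{eq:pm} it uses the Cauchy--Schwarz squeeze (exactly the ``projection-free'' alternative you sketch at the end) to force $s$ and $t$ into a common frame and then applies the scalar Lemma~\ref{lem:ab} to each eigenvalue pair; for \eqref{eq:pm}$\Rightarrow$\eqref{eq:com} it observes that $s+t$ and $|s-t|$ (hence $s-t$) share a frame, so $s$ and $t$ do as well, and runs Lemma~\ref{lem:ab} in reverse. Your main argument instead factors the equivalence through the projection: \eqref{eq:com} $\Leftrightarrow$ $s=P_{\mathcal{K}^n}(s-t)$ by the variational characterization \eqref{pp} and self-duality, and $P_{\mathcal{K}^n}(x)=\tfrac12(x+|x|)$ by the spectral formulas \eqref{eq:xabs}--\eqref{eq:pj}, which together collapse to \eqref{eq:pm}. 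Your route is slicker and avoids the delicate equality-case analysis in Cauchy--Schwarz; it also makes transparent that the lemma is nothing more than the Moreau decomposition $x=P_{\mathcal{K}^n}(x)-P_{\mathcal{K}^n}(-x)$ read as $x_+ + x_- = |x|$, and it ports verbatim to any symmetric cone. The paper's route, on the other hand, stays closer to first principles, makes the role of Lemma~\ref{lem:ab} explicit, and does not presuppose the projection formula. One small remark on your closing sketch: under \eqref{eq:com} the Cauchy--Schwarz equality actually forces $s_2$ and $t_2$ to be \emph{anti}parallel (your wording) rather than parallel as the paper writes, but either way the two Jordan frames coincide up to swapping $u^{(1)}\leftrightarrow u^{(2)}$, so the conclusion is unaffected.
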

\begin{proof}
We first prove that \eqref{eq:com} $\Rightarrow$ \eqref{eq:pm}. Since $s=(s_1,s_2)\in \mathcal{K}^n$ and $t=(t_1,t_2)\in \mathcal{K}^n$, we have
$$s_1\ge \|s_2\|\quad \text{and} \quad t_1\ge \|t_2\|,$$
which implies that
$$|\langle s_2,t_2\rangle|\le \|s_2\|\|t_2\|\le s_1t_1.$$
Thus,
$$\langle s,t\rangle = s_1t_1 + s_2^\top t_2\ge s_1t_1-s_1t_1 = 0,$$
and the equality is valid if and only if $s_2=kt_2\,(k\ge 0)$, $s_1=\|s_2\|$ and $t_1=\|t_2\|$. Hence, the vectors $s$ and $t$ in \eqref{eq:com} share the same Jordan frame~\cite{miao2021,alizadeh2003}. Let $s = \lambda_1 e_1+\lambda_2 e_2$ and $t=\mu_1 e_1 + \mu_2 e_2$, where $\{e_1,e_2\}$ is the Jordan frame. Then we have $\lambda_i,\mu_i\ge 0$ for $i=1,2$ and $\lambda_1\mu_1 = \lambda_2\mu_2 = 0$. It then follows from Lemma~\ref{lem:ab} that $\lambda_1+\mu_1 = |\lambda_1-\mu_1|$ and $\lambda_2+\mu_2 = |\lambda_2-\mu_2|$. On the other hand, we have
$s+t = (\lambda_1+\mu_1)e_1 + (\lambda_2+\mu_2)e_2$ and $|s-t| = |\lambda_1-\mu_1|e_1 + |\lambda_2-\mu_2|e_2$. Hence, we have \eqref{eq:pm}.

Next, we prove that \eqref{eq:pm} $\Rightarrow$ \eqref{eq:com}. By \eqref{eq:pm}, we known that $s+t$ and $s-t$ have the same Jordan frame, from which we obtain that $s$ and $t$ have the same Jordan frame. Indeed, it follows from the fact that
$$2s = (s+t) + (s-t)\quad \text{and}\quad 2t = (s+t) - (s-t).$$
Let $s = \lambda_1 e_1+\lambda_2 e_2$ and $t=\mu_1 e_1 + \mu_2 e_2$. Then it follows from \eqref{eq:pm} that $\lambda_1+\mu_1 = |\lambda_1-\mu_1|$ and $\lambda_2+\mu_2 = |\lambda_2-\mu_2|$, which combine with Lemma~\ref{lem:ab} implies that $\lambda_i,\mu_i\ge 0$ for $i=1,2$ and $\lambda_1\mu_1 = \lambda_2\mu_2 = 0$. Then we can obtain \eqref{eq:com}.
\end{proof}

\begin{rem}
Lemma \ref{lem:xy} can be found in \cite[Proposition~4.1]{fukushima2001} and \cite[Proposition~2.3]{chen2003}. Here we give a new proof based on the Jordan frame, which is different from that of \cite[Proposition~4.1]{fukushima2001}.
\end{rem}

According to Lemma~\ref{lem:xy}, if we set $s+t = Ax-b$ and $s-t = x$, we can obtain the equivalence between SOCAVEs \eqref{eq:socave} and SOCGLCP~\eqref{eq:socglcp}.

\begin{rem}
We should point out that the equivalence between SOCGLCP~\eqref{eq:socglcp} and SOCAVEs~\eqref{eq:socave}
 is implicit in the proof of \cite[Theorem~4.1]{miao2021} and the proof of Lemma~\ref{lem:xy} is also inspired by that of \cite[Theorem~4.1]{miao2021}.
\end{rem}

Let
\begin{equation}\label{eq:res}
r(x) = Q(x) - P_{\mathcal{K}^n}\left[Q(x) - F(x)\right]
\end{equation}
be the residual of \eqref{eq:peq}, then we have the following theorem.

\begin{thm}\label{thm:equiv}
$x^*$ solves SOCAVEs~\eqref{eq:socave} if and only if $r(x^*)=0$. Furthermore, it can be proved that
\begin{equation}\label{eq:ex}
r(x) = Ax -|x| -b.
\end{equation}
\end{thm}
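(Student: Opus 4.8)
The plan is to establish the two assertions separately, reusing the chain of reformulations already assembled above. For the equivalence, I would show that $r(x^*)=0$ unwinds step by step to SOCAVEs~\eqref{eq:socave}. By the definition~\eqref{eq:res} of the residual, $r(x^*)=0$ is literally the projection equation~\eqref{eq:peq} at $x^*$, that is $Q(x^*)=P_{\mathcal{K}^n}[Q(x^*)-F(x^*)]$. Using the variational characterization~\eqref{pp} of the metric projection together with the self-duality $(\mathcal{K}^n)^*=\mathcal{K}^n$, this projection equation is in turn equivalent to the cone-complementarity system $Q(x^*)\in\mathcal{K}^n$, $F(x^*)\in\mathcal{K}^n$, $\langle Q(x^*),F(x^*)\rangle=0$, i.e., to SOCGLCP~\eqref{eq:socglcp}: in the direction $\Rightarrow$ one uses $\langle F(x^*),Q(x^*)\rangle=0$ to get $\langle (Q(x^*)-F(x^*))-Q(x^*),\,v-Q(x^*)\rangle=\langle F(x^*),Q(x^*)\rangle-\langle F(x^*),v\rangle=-\langle F(x^*),v\rangle\le 0$ for all $v\in\mathcal{K}^n$; in the direction $\Leftarrow$ one substitutes $v=0$ and $v=2Q(x^*)$ into~\eqref{pp} to obtain $\langle Q(x^*),F(x^*)\rangle=0$ and then $F(x^*)\in(\mathcal{K}^n)^*$. (Equivalently one may just cite the equivalences SOCGLCP~\eqref{eq:socglcp} $\Leftrightarrow$ SOCGLVI~\eqref{eq:socglvi} $\Leftrightarrow$ \eqref{eq:peq} recorded above.) Finally, Lemma~\ref{lem:xy} applied with $s+t=Ax^*-b$ and $s-t=x^*$ — equivalently $2s=Q(x^*)$, $2t=F(x^*)$, since $Q(x)-F(x)=2x$ and $Q(x)+F(x)=2(Ax-b)$ — turns SOCGLCP~\eqref{eq:socglcp} into $s+t=|s-t|$, i.e., $Ax^*-b=|x^*|$, which is precisely~\eqref{eq:socave}. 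Composing the three equivalences gives the first claim.

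For the identity~\eqref{eq:ex}, I would compute directly. From $Q(x)=Ax+x-b$ and $F(x)=Ax-x-b$ one has $Q(x)-F(x)=2x$, and since $\mathcal{K}^n$ is a cone the metric projection is positively homogeneous (substituting $y=\alpha z$ in the defining minimization shows $P_{\mathcal{K}^n}(\alpha x)=\alpha P_{\mathcal{K}^n}(x)$ for $\alpha>0$), so $P_{\mathcal{K}^n}[Q(x)-F(x)]=P_{\mathcal{K}^n}(2x)=2x_+$. Hence $r(x)=Q(x)-2x_+=Ax+x-b-2x_+$, and the claim reduces to the SOC identity $x-2x_+=-|x|$, i.e., $x+|x|=2x_+$. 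I would derive this from the spectral decomposition~\eqref{eq:sd}: writing $x=\lambda_1(x)u_x^{(1)}+\lambda_2(x)u_x^{(2)}$, the function calculus associated with $\mathcal{K}^n$ gives $x_+=\max\{\lambda_1(x),0\}u_x^{(1)}+\max\{\lambda_2(x),0\}u_x^{(2)}$ (consistent with~\eqref{eq:pj}) and $|x|=|\lambda_1(x)|u_x^{(1)}+|\lambda_2(x)|u_x^{(2)}$ (consistent with~\eqref{eq:xabs}), so that the scalar identity $2\max\{\lambda,0\}=\lambda+|\lambda|$ applied to each eigenvalue yields $2x_+=x+|x|$. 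Substituting back, $r(x)=Ax+x-b-(x+|x|)=Ax-|x|-b$, as asserted.

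The genuinely delicate point is the SOC identity $2x_+=x+|x|$: one must reconcile the three-branch projection formula~\eqref{eq:pj} with the absolute-value formula~\eqref{eq:xabs}, equivalently match the sign pattern $\lambda_1(x)\le\lambda_2(x)$ against the cases $x\in\mathcal{K}^n$ (both eigenvalues nonnegative), $x\in-\mathcal{K}^n$ (both nonpositive) and the remaining case ($\lambda_1(x)<0<\lambda_2(x)$), treating the degeneracy $x_2=0$ separately since the Jordan frame is then non-unique although $x_+$ and $|x|$ stay well defined. Everything else is routine bookkeeping, so in the write-up I would lead with the spectral-decomposition argument and, if helpful, append a remark confirming $2x_+=x+|x|$ by the explicit case check against~\eqref{eq:pj} and~\eqref{eq:xabs}.
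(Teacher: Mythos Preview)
Your proposal is correct, but the route differs from the paper's in two respects.

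First, you prove the equivalence $r(x^*)=0\Leftrightarrow$ SOCAVEs~\eqref{eq:socave} separately via the chain \eqref{eq:peq} $\Leftrightarrow$ \eqref{eq:socglvi} $\Leftrightarrow$ \eqref{eq:socglcp} $\Leftrightarrow$ \eqref{eq:socave}, and then prove the identity~\eqref{eq:ex} afterward. The paper instead establishes only the identity $r(x)=Ax-|x|-b$; the equivalence is then immediate since $r(x^*)=0$ literally reads $Ax^*-|x^*|-b=0$. Your detour through the variational reformulations is sound but redundant once~\eqref{eq:ex} is in hand.

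Second, for the identity itself the paper does exactly the explicit three-case check you mention only as an optional appendix: it splits according to $x\in\mathcal{K}^n$, $x\in-\mathcal{K}^n$, and the remaining case, plugging the branch formulas~\eqref{eq:pj} and~\eqref{eq:xabs} into $Q(x)-P_{\mathcal{K}^n}(2x)$ and simplifying each time to $Ax-|x|-b$. Your spectral-calculus argument --- reducing $2x_+=x+|x|$ to the scalar identity $2\max\{\lambda,0\}=\lambda+|\lambda|$ applied to each eigenvalue --- is more conceptual and avoids the case split; it also makes transparent why the three cases in~\eqref{eq:pj} and the two cases in~\eqref{eq:xabs} line up. The paper's computation is more hands-on but requires no appeal to the functional calculus for $x_+$ beyond the explicit formula~\eqref{eq:pj}. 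Either argument is short; yours generalizes more readily (e.g., to products of cones or other symmetric cones), while the paper's keeps everything at the level of the concrete formulas already displayed.
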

\begin{proof}
Note that $Q(x) - F(x) = 2x$. We will split the proof into three cases.

\begin{itemize}
  \item [(a)] If $x\in \mathcal{K}^n$, then $2x\in \mathcal{K}^n$. It follows from $|x| = x$ and $P_{\mathcal{K}^n}(2x) = 2x$ that
$$Q(x) - P_{\mathcal{K}^n}\left[Q(x) - F(x)\right] = Ax + x - b - 2x = Ax - x -b = Ax - |x| -b.$$

  \item [(b)] If $x\in -\mathcal{K}^n$, $2x\in -\mathcal{K}^n$. Then it follows from $|x| = -x$ and $P_{\mathcal{K}^n}(2x) = 0$ that
$$Q(x) - P_{\mathcal{K}^n}\left[Q(x) - F(x)\right] = Ax + x - b - 0 = Ax - (-x) -b = Ax - |x| -b.$$

  \item [(c)] If $x\notin \mathcal{K}^n$ and $x\notin -\mathcal{K}^n$, it follows from \eqref{eq:pj} that
\begin{equation*}
P_{\mathcal{K}^n}(2x) = \left[\begin{array}{c}
x_1 + \|x_2\|\\
\frac{x_1x_2}{\|x_2\|}+ x_2
\end{array}\right].
\end{equation*}
In addition, it follows from \eqref{eq:xabs} that
$$
|x| = \left[\begin{array}{c}
\|x_2\|\\
\frac{x_1 x_2}{\|x_2\|}
\end{array}\right].
$$
Then
$$
Q(x) - P_{\mathcal{K}^n}\left[Q(x) - F(x)\right] = Ax + x - b -  \left[\begin{array}{c}
x_1 + \|x_2\|\\
\frac{x_1x_2}{\|x_2\|}+ x_2
\end{array}\right] =  Ax -  \left[\begin{array}{c} \|x_2\|\\
\frac{x_1x_2}{\|x_2\|}
\end{array}\right] - b = Ax - |x| -b.
$$
\end{itemize}
The proof is completed.
\end{proof}

Based on Theorem~\ref{thm:equiv}, we will approach the solution set of SOCAVEs~\eqref{eq:socave} from a continuous perspective by means of trajectories generated by the following projection-type dynamical system:
\begin{equation}\label{eq:dymo}
\frac{dx}{dt} = \gamma A^\top\left\{P_{\mathcal{K}^n}\left[Q(x)-F(x)\right]-Q(x)\right\},
\end{equation}
where $\gamma>0$ is the convergence rate. The construction of the dynamical system \eqref{eq:dymo} is inspired by \cite{xia2004,liu2010,hu2007,gao2001}. Substituting \eqref{eq:res} and \eqref{eq:ex} into \eqref{eq:dymo}, it is reduced to
\begin{equation}\label{eq:dymos}
\frac{dx}{dt} = \gamma A^\top(b + |x| - Ax)\doteq h(x).
\end{equation}
When $A$ is nonsingular, it is easy to prove that the equilibrium points of \eqref{eq:dymos} equal the solutions to SOCAVEs~\eqref{eq:socave}. That is, we have the following theorem.
\begin{thm}\label{thm:equi}
Let $A$ be nonsingular, then $x^*$ is a solution of SOCAVEs~\eqref{eq:socave} if and only if $x^*$ is an equilibrium point of the dynamical system \eqref{eq:dymos}.
\end{thm}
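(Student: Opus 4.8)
The plan is to unwind the definition of an equilibrium point and then exploit the two standing hypotheses, $\gamma>0$ and $A$ nonsingular. By definition, $x^*$ is an equilibrium point of \eqref{eq:dymos} precisely when $h(x^*)=0$, that is, when $\gamma A^\top\bigl(b+|x^*|-Ax^*\bigr)=0$. Since $\gamma>0$, this positive scalar factor can be cancelled, leaving $A^\top\bigl(b+|x^*|-Ax^*\bigr)=0$.

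Next I would observe that $A$ nonsingular implies $A^\top$ nonsingular (they share the same determinant, and indeed $(A^\top)^{-1}=(A^{-1})^\top$). Hence $A^\top y=0$ forces $y=0$, and applying this with $y=b+|x^*|-Ax^*$ gives $b+|x^*|-Ax^*=0$, i.e. $Ax^*-|x^*|-b=0$, which is exactly SOCAVEs~\eqref{eq:socave}. This settles the ``only if'' direction: every equilibrium point of \eqref{eq:dymos} solves SOCAVEs.

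For the converse, suppose $x^*$ solves SOCAVEs~\eqref{eq:socave}, so $Ax^*-|x^*|-b=0$ and hence $b+|x^*|-Ax^*=0$ as well. Then $h(x^*)=\gamma A^\top\cdot 0=0$, so $x^*$ is an equilibrium point of \eqref{eq:dymos}; note this direction does not even require $A$ to be nonsingular. Combining the two directions yields the claimed equivalence. Alternatively, one can route the argument through Theorem~\ref{thm:equiv}: there $r(x)=Ax-|x|-b$ and $h(x)=-\gamma A^\top r(x)$, so $h(x^*)=0\iff A^\top r(x^*)=0\iff r(x^*)=0\iff x^*$ solves SOCAVEs.

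There is no substantive obstacle here: the statement is an immediate bookkeeping consequence of the explicit form \eqref{eq:dymos} of $h$ (justified via Theorem~\ref{thm:equiv}) together with the invertibility of $A^\top$. The only point worth recording explicitly is that nonsingularity of $A$ transfers to $A^\top$, which is exactly what is used to pass from $A^\top(b+|x^*|-Ax^*)=0$ back to $b+|x^*|-Ax^*=0$ in the ``only if'' direction; the ``if'' direction holds unconditionally.
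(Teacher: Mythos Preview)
Your proof is correct and is exactly the straightforward argument the paper has in mind; indeed, the paper does not spell out a proof at all, only remarking that ``it is easy to prove'' the equivalence once $A$ is nonsingular. Your unwinding of $h(x^*)=0$, cancellation of $\gamma>0$, and use of the invertibility of $A^\top$ to pass from $A^\top r(x^*)=0$ to $r(x^*)=0$ (with the converse being unconditional) is precisely the intended justification.
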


Now we are in the position to study the existence of the solutions and the stability of the equilibrium points of the dynamical system \eqref{eq:dymos}. We first consider the existence of the solutions of the dynamical system \eqref{eq:dymos}.

\begin{lem}\label{lem:lipcon}
The function $h$ defined as in \eqref{eq:dymos} is Lipschitz continuous in $\mathbb{R}^n$ with Lipschitz constant $\gamma \|A^\top\|(\|A\|+1)$.
\end{lem}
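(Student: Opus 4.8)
\emph{Proof strategy.} The plan is to reduce the whole statement to one sharp fact: the second-order-cone absolute value map $x\mapsto|x|$ is nonexpansive, i.e. $\||x|-|y|\|\le\|x-y\|$ for all $x,y\in\mathbb{R}^n$. Granting this, from \eqref{eq:dymos} I would write
$$h(x)-h(y)=\gamma A^\top\big((|x|-|y|)-A(x-y)\big),$$
and then, using $\|A^\top M\|\le\|A^\top\|\,\|M\|$ for the spectral norm and the triangle inequality,
$$\|h(x)-h(y)\|\le\gamma\|A^\top\|\big(\||x|-|y|\|+\|A\|\,\|x-y\|\big)\le\gamma\|A^\top\|(\|A\|+1)\,\|x-y\|,$$
which is precisely the claimed bound. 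So everything comes down to proving $\||x|-|y|\|\le\|x-y\|$.

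For this I would first record the identity $|x|=2P_{\mathcal{K}^n}(x)-x$, valid for every $x\in\mathbb{R}^n$. It follows directly from what has already been proved: the three-case computation in the proof of Theorem~\ref{thm:equiv}, together with \eqref{eq:res} and \eqref{eq:ex}, gives $Q(x)-P_{\mathcal{K}^n}(2x)=Ax-|x|-b$, hence $P_{\mathcal{K}^n}(2x)=Q(x)-(Ax-|x|-b)=x+|x|$; since $\mathcal{K}^n$ is a cone, $P_{\mathcal{K}^n}(2x)=2P_{\mathcal{K}^n}(x)$, so $|x|=2P_{\mathcal{K}^n}(x)-x$. Writing $x_+:=P_{\mathcal{K}^n}(x)$, I would then expand
$$\||x|-|y|\|^2=\|2(x_+-y_+)-(x-y)\|^2=4\|x_+-y_+\|^2-4\langle x_+-y_+,\,x-y\rangle+\|x-y\|^2,$$
and invoke the firm nonexpansiveness of the projection, $\|x_+-y_+\|^2\le\langle x_+-y_+,\,x-y\rangle$. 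This last inequality I would derive from the variational characterization \eqref{pp}: apply \eqref{pp} once with $(u,w,v)=(x,x_+,y_+)$ and once with $(u,w,v)=(y,y_+,x_+)$ (both legitimate since $x_+,y_+\in\mathcal{K}^n$) and add the two inequalities, which collapses to $\langle(x-y)-(x_+-y_+),\,y_+-x_+\rangle\le0$, i.e. $\|x_+-y_+\|^2\le\langle x_+-y_+,\,x-y\rangle$. Substituting this back, the term $4\|x_+-y_+\|^2-4\langle x_+-y_+,x-y\rangle$ is $\le0$, so $\||x|-|y|\|^2\le\|x-y\|^2$, which closes the reduction and hence the lemma.

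The only genuine obstacle is this nonexpansiveness step; the rest is bookkeeping. The delicate point there is obtaining the sharp constant $1$: the naive route $|x|=P_{\mathcal{K}^n}(x)+P_{\mathcal{K}^n}(-x)$ combined with the (plain) nonexpansiveness of each projection only yields $\||x|-|y|\|\le2\|x-y\|$, which would inflate the Lipschitz constant to $\gamma\|A^\top\|(\|A\|+2)$ and miss the claim. Using the reflection form $|x|=2x_+-x$ together with the \emph{firmly} nonexpansive estimate for $P_{\mathcal{K}^n}$ is exactly what recovers the factor $1$, and therefore the stated constant $\gamma\|A^\top\|(\|A\|+1)$.
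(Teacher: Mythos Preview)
Your proof is correct and follows essentially the same route as the paper: both reduce the Lipschitz estimate to the nonexpansiveness $\||x|-|y|\|\le\|x-y\|$ and then apply the triangle inequality and submultiplicativity. The only difference is that the paper simply cites this nonexpansiveness from the literature (\cite{huang2019,miao2022b,ke2018}), whereas you supply a self-contained derivation via the identity $|x|=2P_{\mathcal{K}^n}(x)-x$ and the firm nonexpansiveness of the projection; this is a nice addition but not a different strategy.
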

\begin{proof}
For any $x_1,x_2\in \mathbb{R}^n$, we have
\begin{align*}
\|h(x_1)- h(x_2)\| &= \|\gamma A^\top (b + |x_1| - Ax_1)-\gamma A^\top (b + |x_2| - Ax_2)\|\\
&= \gamma\| A^\top \left[A(x_2-x_1)+(|x_1|-|x_2|)\right]\|\\
&\le \gamma \|A^\top\|(\|A\|+1)\|x_1-x_2\|,
\end{align*}
where $\||x_1|-|x_2|\|\le \|x_1-x_2\|$ \cite{huang2019,miao2022b,ke2018} is used in the last inequality. Thus $h$ is a Lipschitz continuous function in $\mathbb{R}^n$ with Lipschitz constant $\gamma \|A^\top\|(\|A\|+1)$.
\end{proof}

Then the following theorem is concluded from Lemma~\ref{lem:solution} and Lemma~\ref{lem:lipcon}.

\begin{thm}
For a given initial value $x(t_0)=x_0$, there exists a unique solution $x(t;x(t_0)),t\in [0,\infty)$ for the dynamical system \eqref{eq:dymos}.
\end{thm}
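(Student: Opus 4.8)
The plan is to obtain the statement as an immediate corollary of Lemma~\ref{lem:solution} together with Lemma~\ref{lem:lipcon}, so the proof is really just a matter of checking hypotheses and quoting the right results in the right order. First I would note that \eqref{eq:dymos} is an instance of the autonomous system \eqref{eq:auto-dysm} with $g=h$, where $h(x)=\gamma A^\top(b+|x|-Ax)$ as defined in \eqref{eq:dymos}. By Lemma~\ref{lem:lipcon}, $h$ is Lipschitz continuous on all of $\mathbb{R}^n$ with constant $L=\gamma\|A^\top\|(\|A\|+1)$; in particular $h$ is continuous, and it is locally Lipschitz continuous at every point of $\mathbb{R}^n$.

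Next I would invoke Lemma~\ref{lem:solution} with these two properties in hand. Continuity of $h$ yields, for the given $t_0$ and $x(t_0)=x_0$, a local solution $x(t;x(t_0))$ on some interval $[t_0,\tau]$ with $\tau>t_0$; local Lipschitz continuity of $h$ at $x_0$ upgrades this to a \emph{unique} local solution; and the fact that the Lipschitz estimate of Lemma~\ref{lem:lipcon} is \emph{global} (uniform on $\mathbb{R}^n$) rather than merely local is precisely what the last clause of Lemma~\ref{lem:solution} requires in order to extend $\tau$ to $+\infty$. Since the system is autonomous, one may translate time and take $t_0=0$ without loss of generality, which gives the unique solution $x(t;x(t_0))$ on $[0,\infty)$ claimed in the statement.

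I do not expect a genuine obstacle here: the theorem is essentially a repackaging of the two preceding lemmas. The only point deserving a sentence of care is the passage from local-in-time existence to global-in-time existence, i.e.\ ruling out finite-time blow-up of the trajectory; this is handled automatically because the Lipschitz bound in Lemma~\ref{lem:lipcon} holds on the whole space with a single constant, so the standard continuation argument embedded in Lemma~\ref{lem:solution} applies verbatim and no separate a priori bound on $\|x(t)\|$ needs to be established.
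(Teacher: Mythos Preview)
Your proposal is correct and follows exactly the paper's approach: the theorem is stated there as an immediate consequence of Lemma~\ref{lem:solution} and Lemma~\ref{lem:lipcon}, with no additional argument supplied. Your write-up simply makes the invocation of those two lemmas explicit, so nothing further is needed.
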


Finally, we consider the stability of the equilibrium points of the dynamical system \eqref{eq:dymos}. To this end we need the following theorem and its proof is inspired by that of \cite[Theorem~2]{he1999}.

\begin{thm}\label{thm:contrac}
If $x^*$ is a solution of the SOCAVEs \eqref{eq:socave} and $\|A^{-1}\|\le 1$, then
\begin{equation}\label{ie:contr}
(x-x^*)^\top A^\top r(x) \ge\frac{1}{2} \left\|r(x)\right\|^2,\; \forall \,x\in \mathbb{R}^n.
\end{equation}
\end{thm}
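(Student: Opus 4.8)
The plan is to apply the variational characterization of the projection, \eqref{pp}, twice — once centred at $x$ and once centred at $x^*$ — and then add the two resulting inequalities. Since $x^*$ solves \eqref{eq:socave}, Theorem~\ref{thm:equiv} gives $r(x^*)=0$, i.e. $Q(x^*)=P_{\mathcal{K}^n}[Q(x^*)-F(x^*)]$; in particular $Q(x^*)\in\mathcal{K}^n$. For brevity set $w(x)\doteq P_{\mathcal{K}^n}[Q(x)-F(x)]$, so that $r(x)=Q(x)-w(x)$ and $w(x)\in\mathcal{K}^n$ for every $x$.

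First I would use \eqref{pp} at $x$, with $u=Q(x)-F(x)$, $w=w(x)=Q(x)-r(x)$, and the admissible test point $v=Q(x^*)\in\mathcal{K}^n$; since $u-w=r(x)-F(x)$ and $v-w=Q(x^*)-Q(x)+r(x)$ this reads
$$\langle r(x)-F(x),\,Q(x^*)-Q(x)+r(x)\rangle\le 0.$$
Then I would use \eqref{pp} at $x^*$, where $w(x^*)=Q(x^*)$ and hence $u-w=-F(x^*)$, with the admissible test point $v=w(x)=Q(x)-r(x)\in\mathcal{K}^n$, obtaining $\langle -F(x^*),\,Q(x)-r(x)-Q(x^*)\rangle\le 0$, that is
$$\langle F(x^*),\,Q(x^*)-Q(x)+r(x)\rangle\le 0.$$

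Adding these two inequalities and writing $z\doteq x^*-x$, together with the identities $Q(x^*)-Q(x)=(A+I)z$ and $F(x^*)-F(x)=(A-I)z$ (immediate from \eqref{eq:socglcp}), yields $\langle r(x)+(A-I)z,\ r(x)+(A+I)z\rangle\le 0$. Expanding this inner product, the two cross terms combine, via the scalar identity $z^\top A^\top z=z^\top Az$, into $2z^\top A^\top r(x)=-2(x-x^*)^\top A^\top r(x)$, while the purely quadratic term reduces to $z^\top(A-I)^\top(A+I)z=\|Az\|^2-\|z\|^2$ (again the skew-symmetric part of $A$ drops out). Consequently
$$\|r(x)\|^2-2(x-x^*)^\top A^\top r(x)+\|A(x-x^*)\|^2-\|x-x^*\|^2\le 0.$$
Finally, the hypothesis $\|A^{-1}\|\le 1$ gives, for every $z$, $\|z\|=\|A^{-1}(Az)\|\le\|A^{-1}\|\,\|Az\|\le\|Az\|$, so $\|A(x-x^*)\|^2-\|x-x^*\|^2\ge 0$; dropping this nonnegative term and rearranging produces \eqref{ie:contr}.

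The step I expect to be most delicate is not conceptual but organizational: correctly assigning the roles of $u$, $w$, and $v$ in \eqref{pp} at each of the two points, verifying that $Q(x^*)$ and $w(x)$ are legitimate elements of $\mathcal{K}^n$ (this is exactly where $r(x^*)=0$ and the fact that $w(x)$ is a projection onto $\mathcal{K}^n$ are used), and then tracking the signs so that after expansion all the $A$-linear cross terms collapse to precisely $2(x-x^*)^\top A^\top r(x)$ and the quadratic-in-$z$ terms to $\|Az\|^2-\|z\|^2$.
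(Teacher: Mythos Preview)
Your proof is correct and follows essentially the same route as the paper's: one projection inequality at $x$ tested against $Q(x^*)\in\mathcal{K}^n$, a second inequality giving $\langle F(x^*),\,w(x)-Q(x^*)\rangle\ge 0$, then add, expand, and discard the nonnegative term $\|A(x-x^*)\|^2-\|x-x^*\|^2$ using $\|A^{-1}\|\le 1$. The only cosmetic difference is that the paper obtains the second inequality directly from self-duality of $\mathcal{K}^n$ together with $F(x^*)\in\mathcal{K}^n$ and $\langle Q(x^*),F(x^*)\rangle=0$, whereas you recast the same fact as a second application of~\eqref{pp} at $x^*$ (legitimate since $r(x^*)=0$ means $Q(x^*)=P_{\mathcal{K}^n}[Q(x^*)-F(x^*)]$); the two formulations are equivalent.
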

\begin{proof}
As mentioned earlier, $x^*$ is also a solution of SOCGLCP \eqref{eq:socglcp} or  SOCGLVI~\eqref{eq:socglvi}.

Since  $\mathcal{K}^n$ is a closed convex set and $Q(x^*)\in \mathcal{K}^n$, it follows from the property \eqref{pp} of the projection mapping that
\begin{align}\nonumber
[v-P_{\mathcal{K}^n}(v)]^\top[P_{\mathcal{K}^n}(v)-Q(x^*)]\ge 0,\quad \forall \;v\in \mathbb{R}^n.
\end{align}
By seting $v\doteq Q(x)-F(x)$, we have
\begin{align}\label{equ6}
[r(x)-F(x)]^\top \{P_{\mathcal{K}^n}[Q(x)-F(x)]-Q(x^*)\}\ge 0.
\end{align}
In addition, it follows from $P_{\mathcal{K}^n}(\cdot)\in \mathcal{K}^n$, $F(x^*)\in \mathcal{K}^n = (\mathcal{K}^n)^*$ and $Q(x^*)^\top F(x^*)=0$ that
\begin{align}\label{equ7}
F(x^*)^\top \{P_{\mathcal{K}^n}[Q(x)-F(x)]-Q(x^*)\}\ge 0.
\end{align}
Adding \eqref{equ6} to \eqref{equ7} and using
\begin{align}\nonumber
P_{\mathcal{K}^n}[Q(x)-F(x)]-Q(x^*)=[Q(x)-Q(x^*)]-r(x),
\end{align}
we obtain
\begin{align}\nonumber
&\left\{[Q(x)-Q(x^*)]+[F(x)-F(x^*)]\right\}^\top r(x)\\\nonumber
&\qquad \ge \left\|r(x)\right\|^2+[Q(x)-Q(x^*)]^\top[F(x)-F(x^*)].
\end{align}
Then it follows from the definitions of $Q$ and $F$ in \eqref{eq:socglcp} that
\begin{equation*}
2 r(x)^\top A(x-x^*)\ge \left\|r(x)\right\|^2+(x-x^*)^\top(A^\top A-I)(x-x^*),\quad \forall \;x\in \mathbb{R}^n,
\end{equation*}
which together with $\|A^{-1}\|\le 1$ completes the proof.
\end{proof}

Now we can give the following stability theorem.
\begin{thm}\label{thm:3-6}
Let $\|A^{-1}\|\le 1$, then the equilibrium point $x^*$ $($if it exists$)$ of the dynamical system \eqref{eq:dymos} is asymptotically stable. In particular, if $\|A^{-1}\|< 1$, then the unique equilibrium point $x^*$ of the dynamical system \eqref{eq:dymos} is globally asymptotically stable.
\end{thm}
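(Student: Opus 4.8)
The plan is to use the Lyapunov machinery collected in Section~\ref{sec:pre} (Theorem~\ref{thm:ls} and Theorem~\ref{thm:als}) together with the key inequality~\eqref{ie:contr} of Theorem~\ref{thm:contrac}. The natural candidate Lyapunov function is
$$
V(x) = \tfrac{1}{2}\|x - x^*\|^2,
$$
which is continuously differentiable, satisfies $V(x^*)=0$, $V(x)>0$ for all $x\neq x^*$, and is radially unbounded (so the last hypothesis of Theorem~\ref{thm:als} holds automatically). Here $x^*$ denotes an equilibrium point of~\eqref{eq:dymos}, which by Theorem~\ref{thm:equi} (valid since $\|A^{-1}\|\le 1$ forces $A$ nonsingular) is precisely a solution of SOCAVEs~\eqref{eq:socave}, so Theorem~\ref{thm:contrac} applies to it.

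The main computation is the derivative of $V$ along trajectories of~\eqref{eq:dymos}. Using $h(x) = \gamma A^\top(b+|x|-Ax) = -\gamma A^\top r(x)$ by~\eqref{eq:ex}, I would write
$$
\frac{dV(x)}{dt} = (x-x^*)^\top h(x) = -\gamma\,(x-x^*)^\top A^\top r(x).
$$
Invoking~\eqref{ie:contr} from Theorem~\ref{thm:contrac}, this gives
$$
\frac{dV(x)}{dt} \le -\frac{\gamma}{2}\,\|r(x)\|^2 \le 0 \quad\text{for all } x\in\mathbb{R}^n,
$$
so $x^*$ is stable by Theorem~\ref{thm:ls}. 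For asymptotic stability I need the strict inequality $dV/dt<0$ for $x\neq x^*$, equivalently $r(x)\neq 0$ whenever $x\neq x^*$; this follows from Theorem~\ref{thm:equiv}, since $r(x)=0$ characterizes solutions of SOCAVEs, and a solution of SOCAVEs is an equilibrium of~\eqref{eq:dymos}, and under $\|A^{-1}\|\le 1$ one should argue the solution (hence equilibrium) is unique — actually uniqueness is not needed for the first statement, only that $r(x)=0 \Rightarrow Ax-|x|-b=0 \Rightarrow x$ is an equilibrium; but to conclude $x = x^*$ from $r(x)=0$ I do need that the given equilibrium $x^*$ is the unique one when $\|A^{-1}\|\le 1$. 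I would establish this by a short contraction-type argument: if $x$ and $x^*$ both solve~\eqref{eq:socave}, then $A(x-x^*) = |x|-|x^*|$, so $\|x-x^*\| = \|A^{-1}(|x|-|x^*|)\| \le \|A^{-1}\|\,\||x|-|x^*|\| \le \|A^{-1}\|\,\|x-x^*\|$; with $\|A^{-1}\|\le 1$ this is consistent but not yet conclusive, so for the first (merely asymptotic, local) claim I would instead note that even without uniqueness, LaSalle-type reasoning or directly Theorem~\ref{thm:ls} with the strict decrease on a neighborhood of the \emph{isolated} equilibrium suffices — and for the global claim I take $\|A^{-1}\|<1$, where the contraction bound $\|x-x^*\|\le\|A^{-1}\|\,\|x-x^*\|$ forces $x=x^*$, giving genuine uniqueness.

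So for the second statement: when $\|A^{-1}\|<1$, the equilibrium is unique by the contraction argument above, the strict inequality $dV/dt<0$ for all $x\neq x^*$ holds because $r(x)\neq 0$ for $x\neq x^*$, and $V$ is radially unbounded; Theorem~\ref{thm:als} then yields global asymptotic stability. I expect the main obstacle to be the bookkeeping around \emph{isolatedness versus uniqueness} of the equilibrium in the $\|A^{-1}\|\le 1$ case: Theorem~\ref{thm:ls}'s asymptotic-stability clause needs $dV/dt<0$ on a punctured neighborhood $\Omega\setminus\{x^*\}$, which in turn needs $r(x)\neq 0$ there, i.e. no other SOCAVEs solution accumulates at $x^*$; the cleanest fix is to observe that $r$ is the residual $Ax-|x|-b$ of a piecewise-affine system, so its zero set is a finite union of polyhedral-type pieces and hence any solution is isolated unless it lies on a positive-dimensional component — and the hypothesis $\|A^{-1}\|\le 1$ rules out the latter by the same contraction estimate applied within any such component. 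With that point handled, everything else is the routine substitution displayed above.
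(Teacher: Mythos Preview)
Your overall strategy coincides with the paper's: build a Lyapunov function whose gradient is proportional to $x-x^*$, differentiate along trajectories to obtain $-\gamma\,(x-x^*)^\top A^\top r(x)$ up to a positive scalar factor, and invoke inequality~\eqref{ie:contr}. The paper uses $V(x)=e^{\|x-x^*\|^2}-1$ rather than your $\tfrac12\|x-x^*\|^2$, but this is cosmetic since $\nabla V$ is in both cases a positive multiple of $x-x^*$ and the radial-unboundedness needed for Theorem~\ref{thm:als} holds either way. Your treatment of the global case $\|A^{-1}\|<1$ is correct and matches the paper, which simply cites \cite{miao2022b} for uniqueness; your direct contraction argument yields the same conclusion.

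The gap is in your handling of the first claim. You are right that strict negativity of $dV/dt$ on a punctured neighbourhood requires $r(x)\neq 0$ there, i.e.\ that $x^*$ be an isolated solution of the SOCAVEs. However, your proposed fix fails on two counts. First, the SOC absolute value is \emph{not} piecewise affine for $n\ge 2$: in the region $x\notin\pm\mathcal{K}^n$ one has $|x|=(\|x_2\|,\,x_1x_2/\|x_2\|)$ by~\eqref{eq:xabs}, which is smooth but genuinely nonlinear, so the ``finite union of polyhedral pieces'' picture does not apply. Second, the contraction bound $\|x-x^*\|\le\|A^{-1}\|\,\|x-x^*\|$ is vacuous when $\|A^{-1}\|=1$ and cannot rule out a positive-dimensional solution component. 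Indeed, Example~\ref{exam:toy-example} shows this concern is real: there $\|A^{-1}\|=1$ and the solution set is the half-line $\{(a,0):a\ge 0\}$, so no individual equilibrium is asymptotically stable in the usual sense. The paper's own proof simply writes $dV/dt<0$ for all $x\neq x^*$ without addressing this point, so the subtlety you flagged is not resolved there either; a fully rigorous first statement would need an extra hypothesis (e.g.\ that $x^*$ is isolated) or a reformulation in terms of stability of the set of equilibria.
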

\begin{proof}
The proof is similar to that of \cite[Theorem~3.6]{chen2021}, we still give it here for completeness.

Let $x = x(t;x(t_0))$ be the solution of \eqref{eq:dymos} with initial value $ x(t_0) = x_0 $ and $x^*$ is the equilibrium point nearby $x_0$. Consider the following Lyapunov function:
$$
V(x)= e^{\|x-x^*\|^2}-1, \quad x\in \mathbb{R}^n.
$$
It is obvious that $V(x^*)=0$ and $V(x)>0$ for all $x\neq x^*$. In addition, it follows from \eqref{ie:contr} that
\begin{align*}
\frac{d}{dt}V(x)&= \frac{dV}{dx}\frac{dx}{dt}\\
&= -2 \gamma e^{\|x-x^*\|^2}(x-x^*)^\top A^\top r(x)\\
&\le -\gamma e^{\|x-x^*\|^2}\left\|r(x)\right\|^2< 0,\quad \forall x\neq x^*.
\end{align*}
Hence, the first part of the theorem is now a direct consequence of Theorem~\ref{thm:ls}.

When $\|A^{-1}\|< 1$, SOCAVEs~\eqref{eq:socave} has a unique solution \cite{miao2022b} and thus the equilibrium point is unique. Then it follows from $V(x)\rightarrow \infty$ as $\|x-x^*\|\rightarrow\infty$ and Theorem~\ref{thm:als} that the unique equilibrium point is globally asymptotically stable.
\end{proof}

\begin{rem}
If $\|A^{-1}\| = 1$, then SOCAVEs~\eqref{eq:socave} may have no solutions, more than one solutions and a unique solution. See the next section for more details.
\end{rem}

\section{Numerical simulations}\label{sec:numerical}
In this section, we will present four examples to illustrate the effectiveness of the proposed method. All experiments are implemented in MATLAB R2018b with a machine precision $2.22\times 10^{-16}$ on a PC Windows 10 operating system with an Intel i7-9700 CPU and 8GB RAM. We use ``ode23" to solve the ordinary differential equation. Concretely, the MATLAB expression
$$
[t,y] = \text{ode23}(odefun,tspan,y_0)
$$
is used, which integrates the system of differential equations from $t_0$ to $t_f$ with $tspan = [t_0,t_f]$. Here, ``odefun'' is a function handle.

\begin{exam}[\!\!\cite{huang2019}]\label{ex:sorvsdrs} Consider SOCAVEs \eqref{eq:socave} with
$A = \textbf{tridiag}(-1,4,-1)\in \mathbb{R}^{n\times n}$ and $b=Ax^*-|x^*|,$
where $x^*=(-1,1,-1,1,\cdots,-1,1)^\top\in \mathbb{R}^n.$

In this example, we have $\|A^{-1}\|<1$ and thus SOCAVEs \eqref{eq:socave} has a unique solution for any $b\in\mathbb{R}^n$. Equivalently, the dynamical model \eqref{eq:dymos} has a unique equilibrium point and its globally asymptotical stability will be numerically checked. We set $t_0 = 0$ and $t_f = 0.1$ for this example. Firstly, Figure~\ref{fig:fig-for-2or3} shows the phase diagram of the state $x(t)$ with different initial points for $n=2$ and $n=3$, which visually display the globally asymptotical stability of the equilibrium point. Secondly, it is shown in Figure~\ref{fig:fig-for-err} that a larger $\gamma$ leads to a faster convergence rate, the same phenomenon occurred in some existing works, such as \cite{huang2016,chen2021}.

\begin{figure}[t]
{\centering
\begin{tabular}{ccc}
\hspace{-0.3 cm}
\resizebox*{0.50\textwidth}{0.26\textheight}{\includegraphics{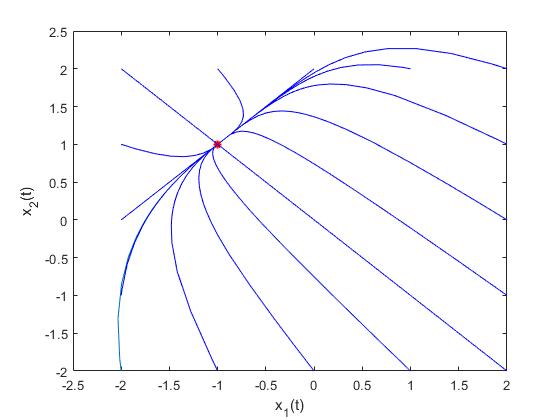}}
& & \hspace{-0.9 cm}
\resizebox*{0.50\textwidth}{0.26\textheight}{\includegraphics{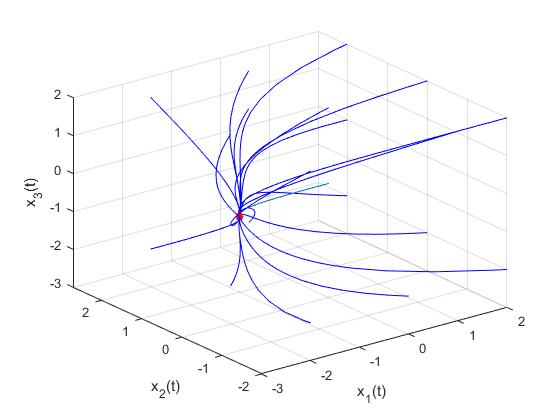}} \vspace{2ex}\\
\end{tabular}\par
}\vspace{-0.15 cm}
\caption{Phase diagrams of \eqref{eq:dymos} with different initial points for  Example~\ref{ex:sorvsdrs} (Left figure: $n=2$ and $16$ different initial points are used; Right figure: $n=3$ and $20$ different initial points are used). The red star point is the exact equilibrium point.}
\label{fig:fig-for-2or3}
\end{figure}

\begin{figure}[t]
{\centering
\begin{tabular}{ccc}
\hspace{-0.3 cm}
\resizebox*{0.50\textwidth}{0.26\textheight}{\includegraphics{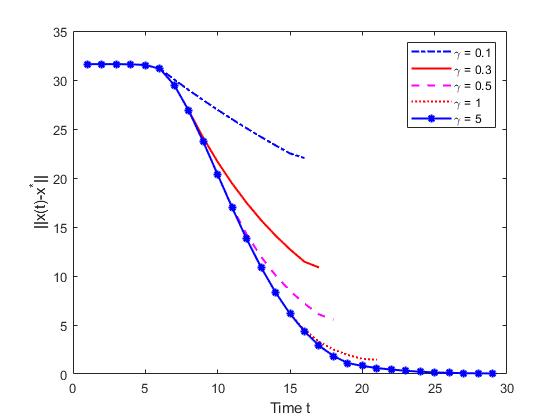}}
& & \hspace{-0.9 cm}
\resizebox*{0.50\textwidth}{0.26\textheight}{\includegraphics{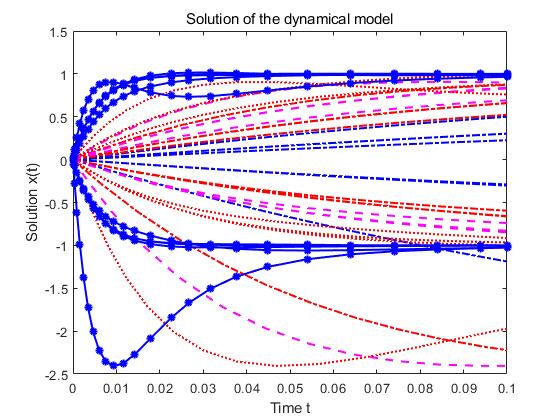}} \vspace{2ex}\\
\end{tabular}\par
}\vspace{-0.15 cm}
\caption{Convergence behaviors of $\|x(t)-x^*\|$ (left figure) and transient behaviors of $x(t)$ (right figure) for Example~\ref{ex:sorvsdrs} with $n=1000$ and $x_0=(0,0,\cdots,0)^\top$.}
\label{fig:fig-for-err}
\end{figure}

\end{exam}

In the following we will give three toy examples for $\|A^{-1}\|=1$, in which SOCAVEs~\eqref{eq:socave} may have more than one solutions, a unique solution or no solutions. In the following examples, $\gamma = 2$ is used.

\begin{exam}\label{exam:toy-example}
Consider SOCAVEs~\eqref{eq:socave} with
$$
A = \left[\begin{array}{cc}
1& 0\\
0 & -1
\end{array}\right],\quad b = \left[\begin{array}{c}
 0\\
0
\end{array}\right].
$$

Obviously, SOCAVEs~\eqref{eq:socave} has infinitely many solutions for this example. Thus the dynamical model \eqref{eq:dymos} has infinitely many equilibrium points. In fact, $x = (a,b)^\top (a\ge 0, b= 0)$ are equilibrium points of the dynamical model \eqref{eq:dymos}. Specifically, we have the following monotone properties of the solution of \eqref{eq:dymos}.
\begin{itemize}
  \item [(a)] If $x_1\ge |x_2|\ge 0$, then
  \begin{align*}
  \frac{dx_1}{dt}&=0,\\
  \frac{dx_2}{dt}&= -2\gamma x_2 \quad \Rightarrow \quad \left\{\begin{array}{l}
  \frac{dx_2}{dt} \ge 0, \quad \text{if} \quad x_2\le 0,\\
  \frac{dx_2}{dt} < 0, \quad \text{if} \quad x_2> 0.
  \end{array}\right.
  \end{align*}

  \item [(b)] If $-|x_2|< x_1< |x_2|$, then
  \begin{align*}
  \frac{dx_1}{dt}&=\gamma (|x_2| - x_1)>0,\\
  \frac{dx_2}{dt}&= -\gamma (1+\frac{x_1}{|x_2|}) x_2 \quad \Rightarrow  \quad \left\{\begin{array}{l}
  \frac{dx_2}{dt} \ge 0, \quad \text{if} \quad x_2\le 0,\\
  \frac{dx_2}{dt} < 0, \quad \text{if} \quad x_2> 0.
  \end{array}\right.
  \end{align*}

  \item [(c)] If $x_1\le -|x_2|\le 0$, then
  \begin{align*}
  \frac{dx_1}{dt}&=-2\gamma x_1 \ge 0,\\
  \frac{dx_2}{dt}&= 0.
  \end{align*}
\end{itemize}
Figure~\ref{fig:fig-for-toyexam} displays the transient behaviors of $x(t)=\left(x_1(t),x_2(t)\right)^\top$ with $7$ different initial points, from which we find that the trajectories generated by the dynamical system \eqref{eq:dymos} approach to a solution of SOCAVEs~\eqref{eq:socave}. The results in Figure~\ref{fig:fig-for-toyexam} illustrate our statements.

\begin{figure}[t]
{\centering
\begin{tabular}{ccc}
\hspace{-0.5 cm}
\resizebox*{0.50\textwidth}{0.23\textheight}{\includegraphics{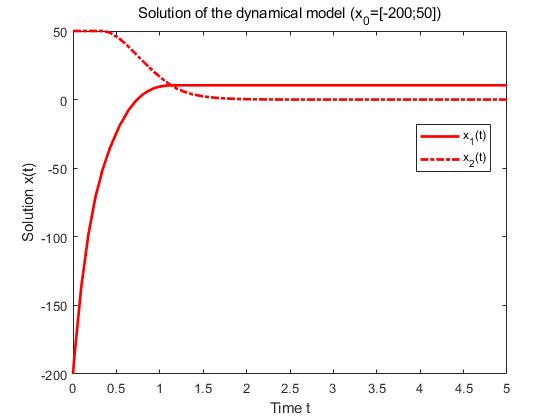}}
& & \hspace{-1 cm}
\resizebox*{0.50\textwidth}{0.23\textheight}{\includegraphics{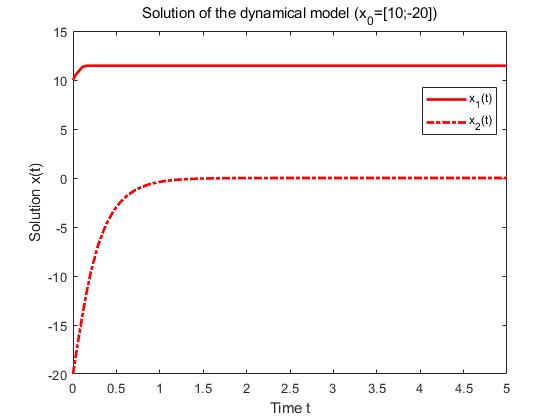}} \vspace{2ex}\\
\hspace{-0.5 cm}
\resizebox*{0.50\textwidth}{0.23\textheight}{\includegraphics{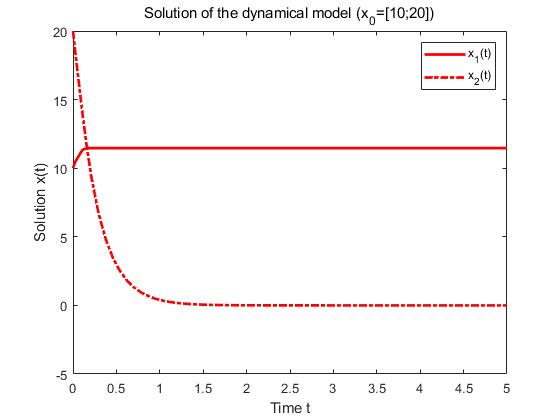}}
& & \hspace{-1 cm}
\resizebox*{0.50\textwidth}{0.23\textheight}{\includegraphics{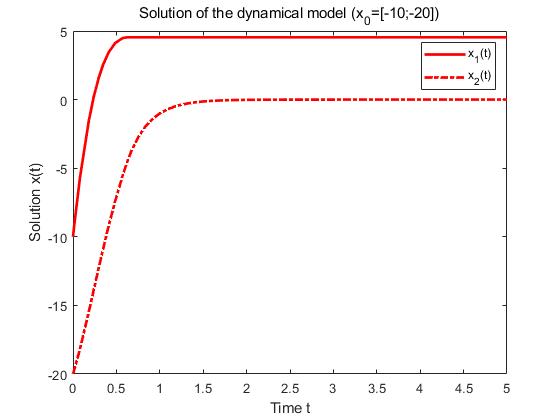}}\vspace{2ex}\\
\hspace{-0.5 cm}
\resizebox*{0.50\textwidth}{0.23\textheight}{\includegraphics{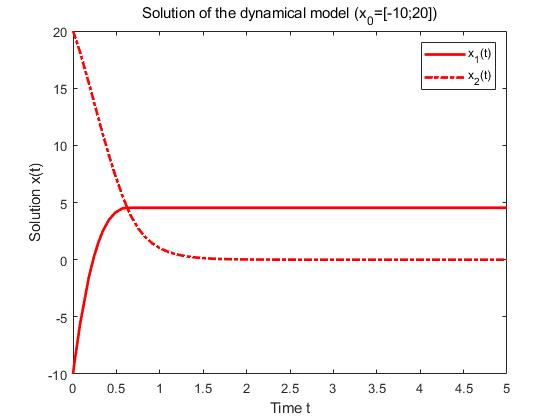}}
& & \hspace{-1 cm}
\resizebox*{0.50\textwidth}{0.23\textheight}{\includegraphics{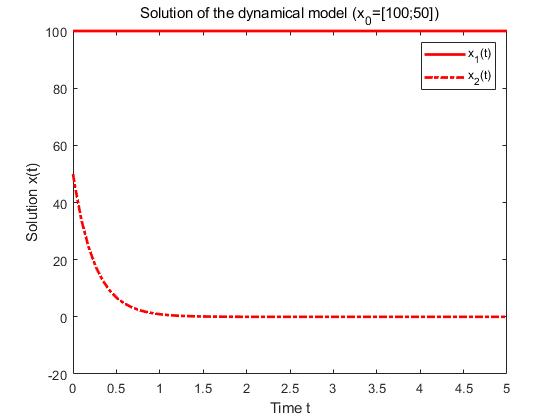}}\vspace{2ex}\\
\hspace{-0.5 cm}
\resizebox*{0.50\textwidth}{0.23\textheight}{\includegraphics{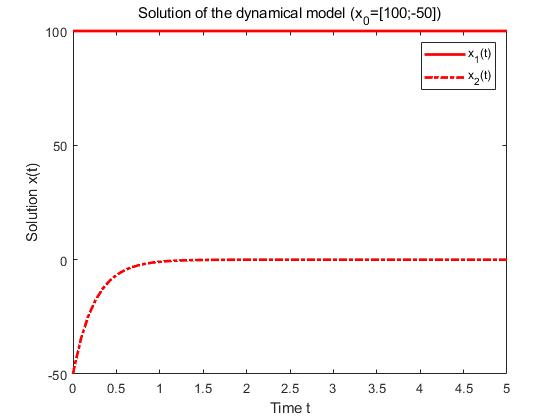}}
& & \hspace{-1 cm}
\end{tabular}\par
}\vspace{-0.15 cm}
\caption{Transient behaviors of $x(t)$ for Example~\ref{exam:toy-example} ($tspan = [0,5]$).}
\label{fig:fig-for-toyexam}
\end{figure}

\end{exam}

\begin{exam}\label{exam:onesolution}
Consider SOCAVEs~\eqref{eq:socave} with
$$
A = \left[\begin{array}{cc}
1& 0\\
0 & -1
\end{array}\right],\quad b = \left[\begin{array}{c}
 -1\\
-1
\end{array}\right].
$$

Obviously, SOCAVEs~\eqref{eq:socave} has a unique solution $x^* = (0, 1)^\top$ for this example. Thus the dynamical model \eqref{eq:dymos} has a unique equilibrium point. In addition, we have the following monotone properties of the solution of \eqref{eq:dymos}.
\begin{itemize}
  \item [(a)] If $x_1\ge |x_2|\ge 0$, then
  \begin{align*}
  \frac{dx_1}{dt}&=-\gamma<0,\\
  \frac{dx_2}{dt}&= -\gamma (-1+2x_2) \quad \Rightarrow \quad \left\{\begin{array}{l}
  \frac{dx_2}{dt} \ge 0, \quad \text{if} \quad x_2\le \frac{1}{2},\\
  \frac{dx_2}{dt} < 0, \quad \text{if} \quad x_2> \frac{1}{2}.
  \end{array}\right.
  \end{align*}

  \item [(b)] If $-|x_2|< x_1< |x_2|$, then
  \begin{align*}
  \frac{dx_1}{dt}&=\gamma (-1+|x_2| - x_1) \quad \Rightarrow \quad \left\{\begin{array}{l}
  \frac{dx_1}{dt} \ge 0, \quad \text{if} \quad x_1\ge 1-|x_2|,\\
  \frac{dx_1}{dt} < 0, \quad \text{if} \quad x_1< 1-|x_2|,
  \end{array}\right.\\
  \frac{dx_2}{dt}&= -\gamma \left[ -1+ (1+\frac{x_1}{|x_2|}) x_2\right] \quad \Rightarrow \quad \left\{\begin{array}{l}
  \frac{dx_2}{dt} \ge 0, \quad \text{if} \quad x_2\le \frac{|x_2|}{x_1+|x_2|},\\
  \frac{dx_2}{dt} < 0, \quad \text{if} \quad x_2> \frac{|x_2|}{x_1+|x_2|}.
  \end{array}\right.
  \end{align*}

  \item [(c)] If $x_1\le -|x_2|\le 0$, then
  \begin{align*}
  \frac{dx_1}{dt}&=\gamma(-1-2 x_1) \quad \Rightarrow \quad\left\{\begin{array}{l}
  \frac{dx_1}{dt} \ge 0, \quad \text{if} \quad x_1\le -\frac{1}{2},\\
  \frac{dx_1}{dt} < 0, \quad \text{if} \quad x_1> -\frac{1}{2},
  \end{array}\right.\\
  \frac{dx_2}{dt}&= \gamma >0.
  \end{align*}
\end{itemize}
Figure~\ref{fig:onesolution} displays the transient behaviors of $x(t)=\left(x_1(t),x_2(t)\right)^\top$ with $8$ different initial points, from which we find that the trajectories generated by the dynamical system \eqref{eq:dymos} approach to the unique solution of SOCAVEs~\eqref{eq:socave}.

\begin{figure}[t]
{\centering
\begin{tabular}{ccc}
\hspace{-0.5 cm}
\resizebox*{0.50\textwidth}{0.23\textheight}{\includegraphics{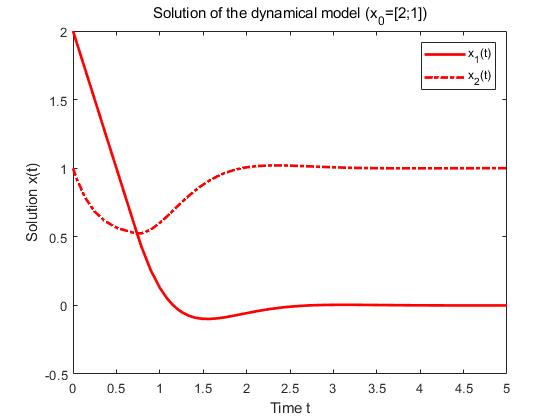}}
& & \hspace{-1 cm}
\resizebox*{0.50\textwidth}{0.23\textheight}{\includegraphics{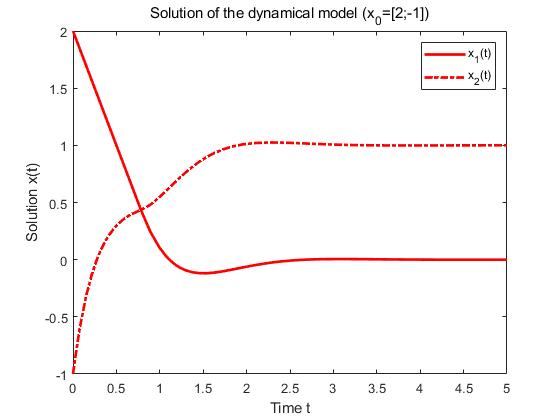}} \vspace{2ex}\\
\hspace{-0.5 cm}
\resizebox*{0.50\textwidth}{0.23\textheight}{\includegraphics{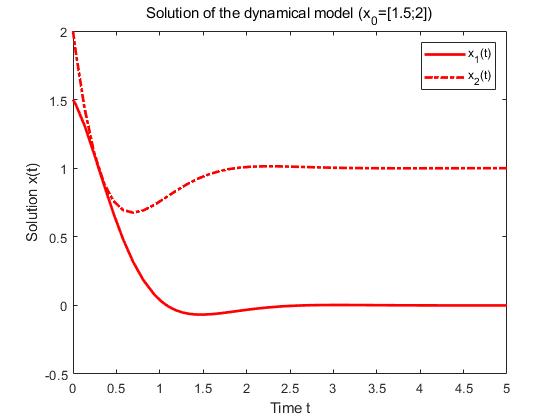}}
& & \hspace{-1 cm}
\resizebox*{0.50\textwidth}{0.23\textheight}{\includegraphics{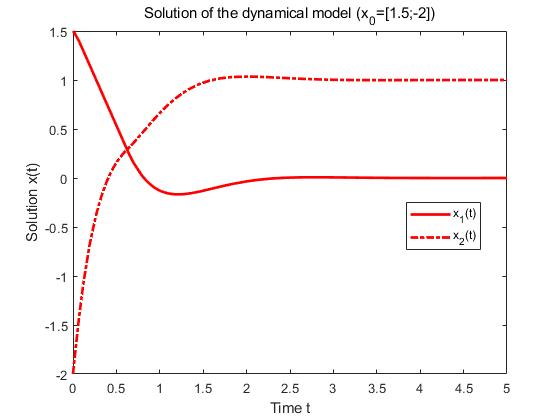}}\vspace{2ex}\\
\hspace{-0.5 cm}
\resizebox*{0.50\textwidth}{0.23\textheight}{\includegraphics{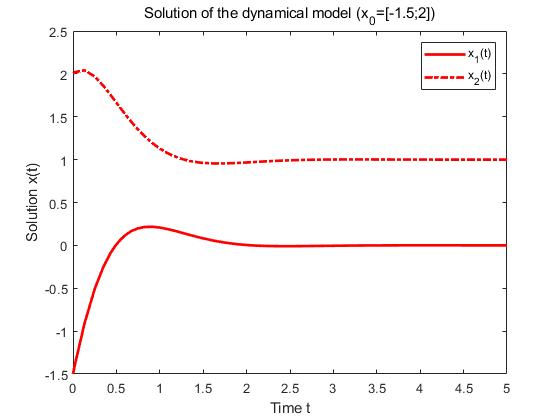}}
& & \hspace{-1 cm}
\resizebox*{0.50\textwidth}{0.23\textheight}{\includegraphics{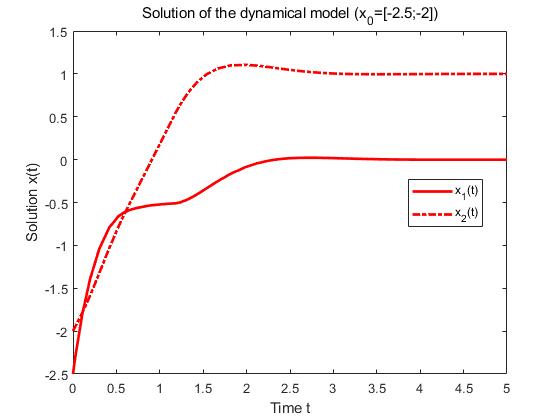}}\vspace{2ex}\\
\hspace{-0.5 cm}
\resizebox*{0.50\textwidth}{0.23\textheight}{\includegraphics{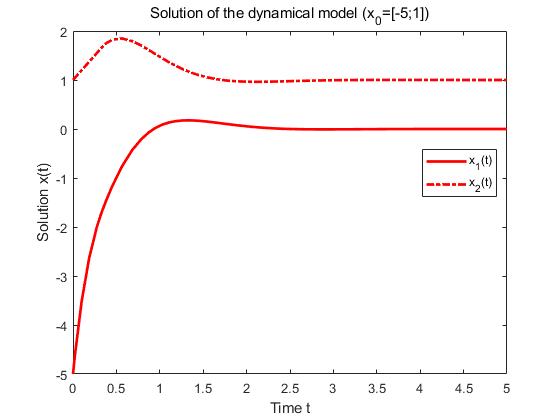}}
& & \hspace{-1 cm}
\resizebox*{0.50\textwidth}{0.23\textheight}{\includegraphics{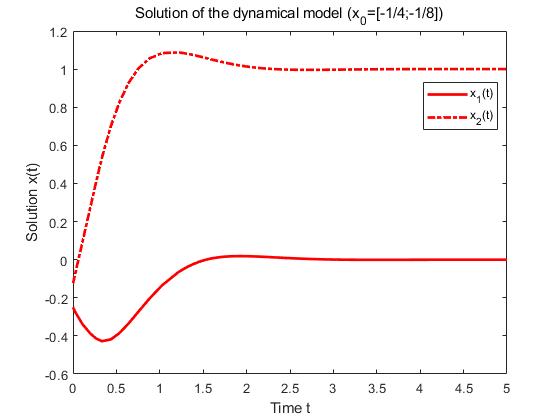}}
\end{tabular}\par
}\vspace{-0.15 cm}
\caption{Transient behaviors of $x(t)$ for Example~\ref{exam:onesolution} ($tspan = [0,5]$).}
\label{fig:onesolution}
\end{figure}

\end{exam}

\begin{exam}\label{exam:nosolution}
Consider SOCAVEs~\eqref{eq:socave} with
$$
A = \left[\begin{array}{cc}
1& 0\\
0 & -1
\end{array}\right],\quad b = \left[\begin{array}{c}
1\\
1
\end{array}\right].
$$

SOCAVEs~\eqref{eq:socave} has no solutions for this example. Thus the dynamical model \eqref{eq:dymos} has no equilibrium points. Indeed, we have
\begin{itemize}
  \item [(a)] If $x_1\ge |x_2|\ge 0$, then
  \begin{align*}
  \frac{dx_1}{dt}&=\gamma > 0,\\
  \frac{dx_2}{dt}&= -\gamma (1+2x_2) \quad \Rightarrow \quad \left\{\begin{array}{l}
  \frac{dx_2}{dt} \ge 0, \quad \text{if} \quad x_2\le -\frac{1}{2},\\
  \frac{dx_2}{dt} < 0, \quad \text{if} \quad x_2> -\frac{1}{2}.
  \end{array}\right.
  \end{align*}

  \item [(b)] If $-|x_2|< x_1< |x_2|$, then
  \begin{align*}
  \frac{dx_1}{dt}&=\gamma (1+|x_2| - x_1)>0,\\
  \frac{dx_2}{dt}&= -\gamma \left[ 1+ (1+\frac{x_1}{|x_2|}) x_2\right] \quad \Rightarrow \quad \left\{\begin{array}{l}
  \frac{dx_2}{dt} \ge 0, \quad \text{if} \quad x_2\le -\frac{|x_2|}{x_1+|x_2|},\\
  \frac{dx_2}{dt} < 0, \quad \text{if} \quad x_2> -\frac{|x_2|}{x_1+|x_2|}.
  \end{array}\right.
  \end{align*}

  \item [(c)] If $x_1\le -|x_2|\le 0$, then
  \begin{align*}
  \frac{dx_1}{dt}&=\gamma (1-2x_1) > 0,\\
  \frac{dx_2}{dt}&= -\gamma <0.
  \end{align*}
\end{itemize}
Thus, for any initial value $x_0$,  at least $x_1(t)$ in the solution of \eqref{eq:dymos} is monotonically increasing. Figure~\ref{fig:nosolution} displays the transient behaviors of $x(t)=\left(x_1(t),x_2(t)\right)^\top$ with $8$ different initial points, which illustrates our claims.

\begin{figure}[t]
{\centering
\begin{tabular}{ccc}
\hspace{-0.5 cm}
\resizebox*{0.50\textwidth}{0.23\textheight}{\includegraphics{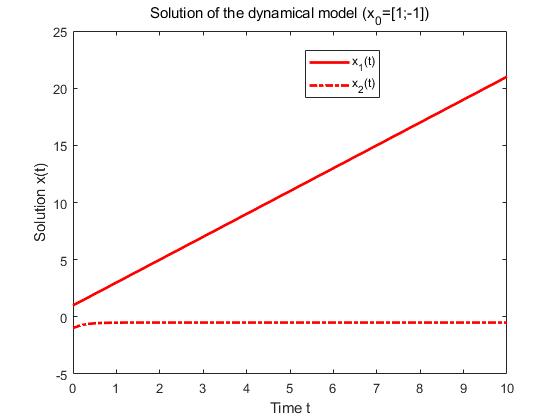}}
& & \hspace{-1 cm}
\resizebox*{0.50\textwidth}{0.23\textheight}{\includegraphics{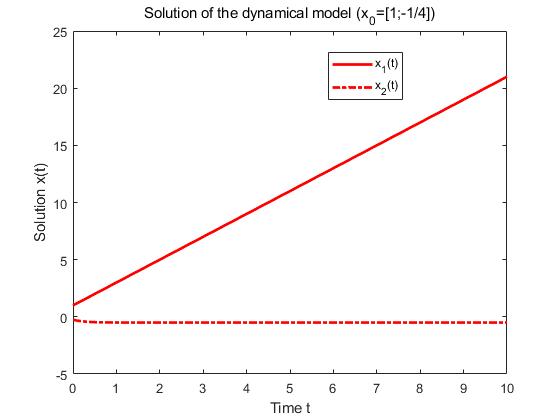}} \vspace{2ex}\\
\hspace{-0.5 cm}
\resizebox*{0.50\textwidth}{0.23\textheight}{\includegraphics{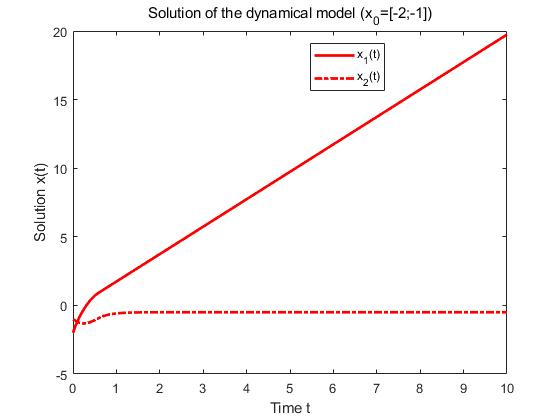}}
& & \hspace{-1 cm}
\resizebox*{0.50\textwidth}{0.23\textheight}{\includegraphics{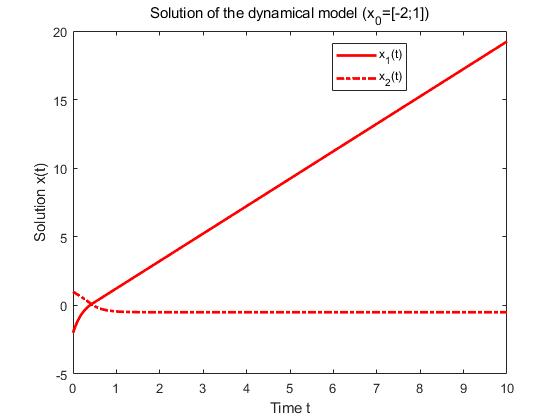}}\vspace{2ex}\\
\hspace{-0.5 cm}
\resizebox*{0.50\textwidth}{0.23\textheight}{\includegraphics{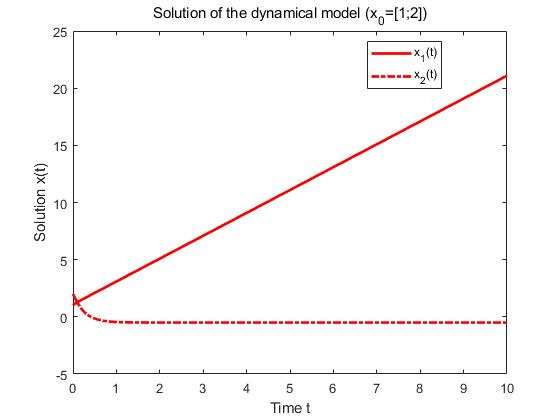}}
& & \hspace{-1 cm}
\resizebox*{0.50\textwidth}{0.23\textheight}{\includegraphics{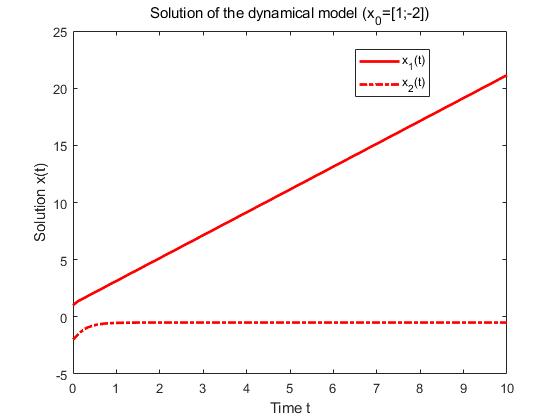}}\vspace{2ex}\\
\hspace{-0.5 cm}
\resizebox*{0.50\textwidth}{0.23\textheight}{\includegraphics{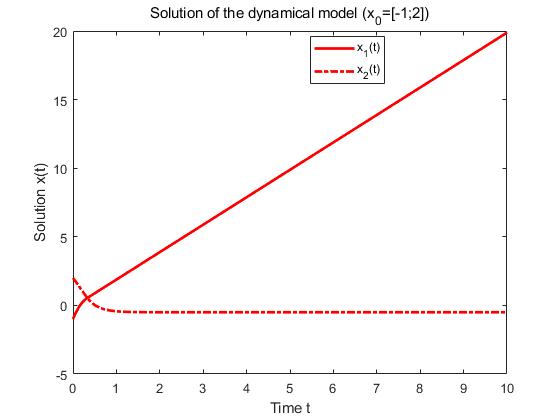}}
& & \hspace{-1 cm}
\resizebox*{0.50\textwidth}{0.23\textheight}{\includegraphics{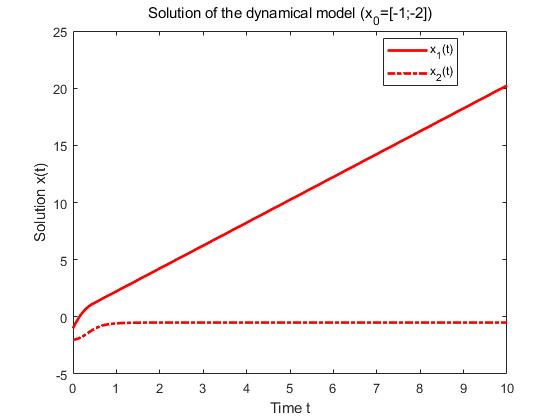}}
\end{tabular}\par
}\vspace{-0.15 cm}
\caption{Transient behaviors of $x(t)$ for Example~\ref{exam:nosolution} ($tspan = [0,10]$).}
\label{fig:nosolution}
\end{figure}

\end{exam}

\section{Brief conclusion}\label{sec:conclusion}
In this paper, a dynamical system is proposed to solve SOCAVEs~\eqref{eq:socave}. The solution of SOCAVEs~\eqref{eq:socave} is equivalent to the equilibrium point of the dynamical model and can be obtained by solving a system of ordinary differential equations. One main feature of our method is that it is inverse-free. The results of
this paper can be considered as extensions of those in \cite{chen2021}.


\end{document}